\newtheorem{theorem}{Theorem}
\newtheorem{definition}[theorem]{Definition}
\newtheorem{lemma}[theorem]{Lemma}
\newtheorem{remark}[theorem]{Remark}
\newtheorem{assumption}[theorem]{Assumption}
\newcommand*{\N}{\ensuremath{\mathbb{N}}}
\newcommand*{\Z}{\ensuremath{\mathbb{Z}}}
\newcommand*{\R}{\ensuremath{\mathbb{R}}}
\newcommand*{\C}{\ensuremath{\mathbb{C}}}
\renewcommand{\i}{\mathrm{i}}
\renewcommand{\phi}{\varphi}
\renewcommand{\rho}{{\varrho}}
\renewcommand{\epsilon}{{\varepsilon}}
\DeclareMathOperator*{\esssup}{ess\,sup}
\renewcommand{\d}[1]{\,\mathrm{d}#1 \,}
\newcommand{\D}{\mathcal{D}}
\newcommand{\J}{\mathcal{J}} % Bloch transform
\newcommand{\0}{{0}} %  {0}
\newcommand{\grad}{\nabla}
\newcommand{\W}{{W_{\hspace*{-1pt}{\Lambda}}}} % Wigner-Seitz cell
\newcommand{\Wast}{{W_{\hspace*{-1pt}{\Lambda}^\ast}}} % Brillouin zone
\newlength{\dhatheight}
\newcommand{\high}[1]{{\color{black}{#1}}}
\begin{document}

\sloppy

\title{\high{Numerical methods for scattering} problems from multi-layers with different periodicities}
\author{Ruming Zhang\thanks{Institute for Applied and Numerical Mathematics, Karlsruhe Institute of Technology, Karlsruhe, Germany
; \texttt{ruming.zhang@kit.edu}}}
\date{}
\maketitle

\begin{abstract}
In this paper, we consider a numerical method to solve scattering problems with multi-periodic layers with different periodicities. The main tool applied in this paper is the Bloch transform. With this method, the problem is written into an equivalent coupled family of quasi-periodic problems. As the Bloch transform is only defined for one fixed period, the inhomogeneous layer with another period is simply treated as a non-periodic one. First, we approximate the refractive index by a periodic one where its period is an integer multiple of the fixed period, and it is decomposed by finite number of quasi-periodic functions. Then the coupled system is reduced into a simplified formulation. 
A convergent finite element method is proposed for the numerical solution, and the numerical method has been applied to several numerical experiments. At the end of this paper, relative errors of the numerical solutions will be shown to illustrate the convergence of the numerical algorithm.
\end{abstract}

\section{Introduction}

In this paper, we develop a numerical method to solve acoustic scattering problems with two-layer structures in 2D spaces, where each layer is periodic with different periodicities. This is a simplified \high{model} of the design of microstrip array antennas in 3D ( see \cite{Bhatt2000}). The easier case, for example, when either the periodicities are the same, or the quotient of the periodicities is rational, the problem is naturally reduced into a problem with one periodic layer, which is easily treated in the classic frame work for quasi-periodic scattering problems (see \cite{Stryc1998,Lechl2016}). However, if the quotient of the periodicities is either irrational or extremely large/small, the problem becomes much more complicated. For the first case, the original problem is impossible to be reduced into any bounded domain naturally, thus it is a scattering problem with unbounded inhomogeneous medium; while for the second case, although the problem could be reduced into one periodic cell, the cell will be very large. For both cases,  numerical simulations of these problems are very challenging.

Scattering problems with unbounded structures has been investigated by many mathematicians in  decades. Based on the integral equation method, the well-posedness of these scattering problems has been established (see \cite{Chand1996,Chand1999,Chand1998,Zhang2003}), and numerical methods have been proposed for rough surface scattering problems (see \cite{Meier2000,Chand2002,Arens2006a}). The variational method, on the other hand, has also been applied to theoretical analysis of scattering from unbounded obstacles (see \cite{Chand2005,Chand2007,Lechl2009,Li2012}). An important extension of the variational method is to consider the well-posedness in weighted Sobolev spaces (see \cite{Chand2010}), and more generalized cases (e.g. incident plane waves) are included. Similar results in weighted Sobolev spaces \high{have} been shown for more generalized boundary conditions in \cite{Hu2015}.

Recently, a Floquet-Bloch transform based method has been proposed for the study of scattering problems with unbounded structures, especially for structures that are either periodic or slightly different from  periodic ones. As far as the author knows, the first paper that adopted this method is \cite{Coatl2012} for scattering problems with locally perturbed periodic mediums. Inspired by this paper, the method has been extended to scattering problems with non-periodic incident fields with (locally perturbed) periodic  surfaces (see \cite{Lechl2015e,Lechl2017,Hadda2015}). Based on the theoretical results, Bloch-transform based numerical methods have been proposed (see  \cite{Lechl2016a,Lechl2016b,Lechl2017}.  The Bloch transform was also applied to other cases, i.e., scattering problems in locally perturbed periodic waveguides, see \cite{Fliss2015}. For all these works listed above, the perturbations of periodic surfaces or inhomogeneous mediums are assumed to be compactly supported. In this case, the Bloch transformed problem has a simplified variational form. However, for more general cases, i.e., when the perturbations are non-compactly supported, the problems become much more complicated and difficult to be dealt with. Further study on the Bloch transform is then required for the globally perturbed problems.

In this paper, the Bloch transformed scattering problems from different periodic layers in $\R^2$ will be investigated. The original problem is approximated by a new one with a periodic layer, and the weak formulation for the Bloch transformed new problem is established, and the equivalence, well-posedness and regularity results are proved following \cite{Lechl2016}. Based on the weak formulation, the numerical method will be introduced. The key step is the approximation of periodic inohomogeneous media by a finite series of quasi-periodic functions with another different period.  The inhomogeneous media is first approximated by a periodic one with a relatively larger period, and the compactly supported function is then approximated by a finite Fourier series. With the method inspired by the decomposition (52) in \cite{Hadda2017}, the Fourier series is written into the sum of finite number of quasi-periodic functions.

The rest of the paper is organized as follows. In Section 2, we will describe the mathematical model of the scattering problems and show the well-posedness of the problem. In Section 3, we approximate the original scattering problem by replacing the inhomogeneous layer with a periodic one. Then we apply the Bloch transform to the new problem in Section 4. In Section 5 and 6, we formulate the discretization of the transformed problem. Finally, we show some numerical examples in the last section.

\section{Scattering problems: mathematical model}

In this section, we describe the mathematical modal for scattering problems with periodic layers with different periods in two dimensional spaces (see Figure \ref{sample}). Let the straight line $\Gamma_h:=\R\times\{h\}$ for any $h\in\R$, and assume that $\Gamma_{h_0}$ where $h_0>0$ is a sound-soft surface. Define the domains  by
\begin{equation*}
D:=\R\times(h_0,\infty);\, \quad D_H=\R\times(h_0,H)
\end{equation*}
where $H>h_0$. Assume that the infinite layer is embedded in $D_H$ for some fixed positive number $H$, and it is divided into two layers by a straight line $\Gamma_{H_1}$, for some $H_1\in(h_0,H)$. Let $D_1=\R\times(h_0,H_1)$ and $D_2=\R\times(H_1,H)$. Let
\begin{equation*}
n(x_1,x_2)=\begin{cases}
n_1(x_1,x_2),\quad\text{ when } x\in D_1;\\
n_2(x_1,x_2),\quad\text{ when } x\in D_2;\\
0,\quad\text{ when }x_2\geq H,
\end{cases}
\end{equation*}
where $n_1$ and $n_2$ is are both periodic functions in $x_1$-direction. The period of $n_1$ is $\Lambda_1>0$ and that of $n_2$ is $\Lambda>0$. We simply assume that $\Lambda_1\neq \Lambda$ without further conditions.

\begin{remark}
  $n$ is simply assumed to be in the space $L^{\infty}(D)$. However, to guarantee the convergence of the numerical method, we may assume that the refractive index has a higher regularity later.
\end{remark}

\begin{figure}[H]
\centering
\includegraphics[width=12cm]{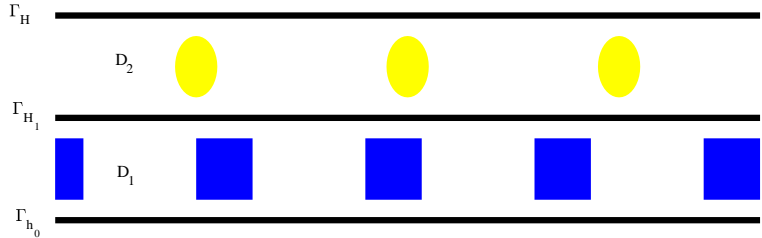}
\caption{Inhomogeneous layers with different periodicities.}
\label{sample}
\end{figure}

Consider a scattering problem with an inhomogeneous medium, which is modelled by the Helmholtz equation with a homogeneous Dirichlet boundary condition on $\Gamma_{h_0}$\high{:}
\begin{equation}\label{eq:helmholtz}
\Delta u+k^2 (1+n) u=g\quad\text{ in }D,\quad u=0\text{ on }\Gamma_{h_0},
\end{equation}
where $g$ is the source term supported in $D_H$. To guarantee that the solution $u$ is upward propagating, it is required that $u$ satisfies the following boundary condition on $\Gamma_H$
\begin{equation}\label{eq:boundary}
\frac{\partial u}{\partial x_2}(x_1,H)=T^+\left[u\big|_{\Gamma_H}\right],
\end{equation}
where $T^+$ is the Dirichlet-to-Neumann map that maps $H^{1/2}(\Gamma_H)$ to $H^{-1/2}(\Gamma_H)$  (see \cite{Chand2005}), and it is defined by
\begin{equation}\label{eq:DtN}
T^+\phi=\frac{\i}{{2\pi}}\int_{\R} \sqrt{k^2-|{\xi}|^s}e^{\i x_1\cdot{\xi}}\,\widehat{\phi}({\xi})\d{\xi},\quad\text{ where }\phi=\frac{1}{2\pi}\int_{\R} e^{\i x_1\cdot{\xi}}\,\widehat{\phi}({\xi})\d{\xi}.
\end{equation}
The scattering problem is now formulated into the one that is defined \high{on}  the domain $\high{D}_H$ with finite height.  The weak formulation for the scattering problem is, given any $g\in H^{-1}(D_H)$, to find a solution $u\in\widetilde{H}^1(\high{D}_H)$ such that
\begin{equation}\label{eq:var_origional}
\int_{D_H}\left[\nabla u\cdot\nabla\overline{v}-k^2(1+n) u\overline{v}\right]\d x-\int_{\Gamma_H}T^+\left[u\big|_{\Gamma_H}\right]\overline{v}\d s=-\int_{D_H}g\overline{v}\d x,
\end{equation}
for all $v\in\widetilde{H}^1(D_H)$ with compact support in $\overline{D_H}$.  Note that the tilde in $\widetilde{H}^1(D_H)$ shows that the functions in this space belong to $H^1(D_H)$ and satisfy the homogeneous Dirichlet boundary condition on $\Gamma_{h_0}$.  Similar notations are adopted for other spaces, e.g., $\widetilde{H}_r^1(D_H)$ and $H_0^r(\Wast;\widetilde{H}_{\alpha}^s(D^\Lambda_H))$, in the following parts of this paper.

\high{
Following \cite{Chand2010}, we consider the solution of the scattering problem in weighted Sobolev spaces. Define the weighted Sobolev space in $D_H$ for any fixed $r\in\R$ by:
\begin{equation*}
 H_r^s(D_H):=\left\{\phi\in\mathcal{D}'(D_H):\,(1+|x|^2)^{r/2}\phi(x)\in H^s(D_H)\right\}.
\end{equation*}
The definitions for $H^{1/2}_r(\Gamma_H)$ and $H^{-1/2}_r(\Gamma_H)$ are similar. 
}

From \cite{Chand2010} again, the operator $T^+$ is bounded from $H^{1/2}_r(\Gamma_H)$ to $H^{-1/2}_r(\Gamma_H)$ for any $|r|<1$, thus the left-hand-side of \eqref{eq:var_origional} is a bounded sesquilinear form defined in $\widetilde{H}_r^1(D_H)\times {H}_{-r}^1(D_H)$. For any $g\in H^{-1}_r(D_H)$, we are looking for a solution $u\in \widetilde{H}_r^1(D_H)$ such that \eqref{eq:var_origional} holds for any $v\in \widetilde{H}_{-r}^1(D_H)$. From Riesz's lemma, there is a bounded linear operator depending on $n$, i.e., $\mathcal{B}_r(n):\, {H}_r^1(D_H)\rightarrow({H}_{\high{-}r}^1(D_H))^*$, such that
\begin{equation*}
\int_{D_H}\left[\nabla u\cdot\nabla\overline{v}-k^2(1+n) u\overline{v}\right]\d x-\int_{\Gamma_H}T^+\left[u\big|_{\Gamma_H}\right]\overline{v}\d s=\left(\mathcal{B}_r(n) u,v\right)_{(H_{-r}^1(D_H))^*\times H_{-r}^1(D_H)}.
\end{equation*}
Especially, when $n=0$ in $D$, the problem is reduced to the scattering problem from the sound soft surface $\Gamma_{h_0}$ with homogeneous media in $D$. The well-posedness for this problem in the space $\widetilde{H}_r^1(D_H)$ has been proved in \cite{Chand2010}, thus the operator $\mathcal{B}_r(0)$ is invertible. Then the operator 
\begin{equation*}
\mathcal{B}_r(n):=\mathcal{B}_r(0)+\big[\mathcal{B}_r(n)-\mathcal{B}_r(0)\big]
\end{equation*}
is a perturbation of the isomorphism  $\mathcal{B}_r(0)$. The perturbation $\mathcal{B}_r(n)-\mathcal{B}_r(0)$ satisfies
\begin{equation*}
\left(\left[\mathcal{B}_r(n)-\mathcal{B}_r(0)\right]u,v\right)_{H_r^1(D_H)\times H_{-r}^1(D_H)}=-k^2\int_{D_H}n u\overline{v}\d x.
\end{equation*}

\begin{lemma}
 The operator $\mathcal{K}_r(n):=\mathcal{B}_r(n)-\mathcal{B}_r(0)$ is bounded from $H_r^1(D_H)$ to $\left(H_{-r}^1(D_H)\right)^*$, and the norm is bounded by
 \begin{equation*}
  \|\mathcal{K}_r(n)\|\leq k^2\|n\|_{\infty},
 \end{equation*}
where $\|\cdot\|$ is the operator norm and $\|\cdot\|_\infty$ is the $L^\infty(D_H)$ norm.
\end{lemma}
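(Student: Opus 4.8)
The plan is to estimate the sesquilinear form that defines $\mathcal{K}_r(n)$ directly, using the pairing between the weighted spaces $H_r^1(D_H)$ and $H_{-r}^1(D_H)$. Recall from the computation preceding the lemma that for $u\in H_r^1(D_H)$ and $v\in H_{-r}^1(D_H)$ one has
\begin{equation*}
\left(\mathcal{K}_r(n)u,v\right)_{(H_{-r}^1(D_H))^*\times H_{-r}^1(D_H)}=-k^2\int_{D_H}n\,u\,\overline{v}\d x,
\end{equation*}
so it suffices to bound the right-hand side by $k^2\|n\|_\infty\|u\|_{H_r^1(D_H)}\|v\|_{H_{-r}^1(D_H)}$ and then take the supremum over $v$ of unit norm.

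First I would insert the factor $1=(1+|x|^2)^{r/2}(1+|x|^2)^{-r/2}$ into the integrand and regroup, writing $n u\overline{v}=n\cdot\big[(1+|x|^2)^{r/2}u\big]\cdot\big[(1+|x|^2)^{-r/2}\overline{v}\big]$. Pulling out $\|n\|_\infty$ and applying the Cauchy--Schwarz inequality on $L^2(D_H)$ to the two bracketed factors gives
\begin{equation*}
\left|\int_{D_H}n\,u\,\overline{v}\d x\right|\le\|n\|_\infty\,\big\|(1+|x|^2)^{r/2}u\big\|_{L^2(D_H)}\,\big\|(1+|x|^2)^{-r/2}v\big\|_{L^2(D_H)}.
\end{equation*}
By the definition of the weighted spaces, the first factor equals $\|u\|_{H_r^0(D_H)}\le\|u\|_{H_r^1(D_H)}$ and the second equals $\|v\|_{H_{-r}^0(D_H)}\le\|v\|_{H_{-r}^1(D_H)}$. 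Combining the two displays yields $|(\mathcal{K}_r(n)u,v)|\le k^2\|n\|_\infty\|u\|_{H_r^1(D_H)}\|v\|_{H_{-r}^1(D_H)}$, and dividing by $\|v\|_{H_{-r}^1(D_H)}$ and taking the supremum over all such $v$ gives simultaneously the boundedness of $\mathcal{K}_r(n):H_r^1(D_H)\to(H_{-r}^1(D_H))^*$ and the claimed norm estimate.

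There is essentially no hard step here; the only point that deserves a line of care is that the integral $\int_{D_H}nu\overline v\d x$ is absolutely convergent and genuinely realises the duality pairing. This follows because the weight-splitting above already exhibits $n u\overline v$ as the product of an $L^\infty$ function with two $L^2(D_H)$ functions, hence it lies in $L^1(D_H)$; and since $C_0^\infty$ functions are dense in the weighted Sobolev spaces and the formula for the form was obtained on such functions, the identity for the pairing extends by continuity. No compactness or additional regularity of $n$ beyond $n\in L^\infty(D_H)$ is used, and the constant in the bound is exactly $k^2$.
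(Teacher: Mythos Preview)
Your argument is correct and is exactly the direct estimate the paper has in mind; the paper itself simply states ``The proof is trivial thus is omitted.'' Your weight-splitting plus Cauchy--Schwarz is the natural way to make that triviality explicit, and nothing more is needed.
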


The proof is trivial thus is omitted.

\high{
As $\mathcal{B}_r(0)$ is invertible and $\mathcal{K}_r(n)$ is bounded by $k^2\|n\|_{\infty}$,  when $k^2\|n\|_{\infty}$ is small enough, $\mathcal{B}_r(n)=\mathcal{B}_r(0)+\mathcal{K}_r(n)$ is invertible. We conclude the well-posedness result for \eqref{eq:var_origional} in the following theorem.

\begin{theorem}\label{th:solv}
Suppose  $k^2\|n\|_{\infty}$ is small enough, i.e., $k^2\|n\|_{\infty}\leq \left\|\mathcal{B}_r(0)^{-1}\right\|^{-1}$.
Given any function $g\in H_r^{-1}(D_H)$ for some fixed $|r|<1$, the variational problem \eqref{eq:var_origional} is uniquely solvable in the space $\widetilde{H}_r^1(D_H)$. Moreover, there is a constant that depends on $k$ and $n$ such that
\begin{equation}\label{eq:stab}
 \|u\|_{H_r^1(D_H)}\leq C\|g\|_{H^{-1}_r(D_H)}.
\end{equation}

\end{theorem}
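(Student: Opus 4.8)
The plan is to derive the result directly from the operator-theoretic setup developed just before the statement. By hypothesis $\mathcal{B}_r(0):\widetilde{H}_r^1(D_H)\to(\widetilde{H}_{-r}^1(D_H))^*$ is an isomorphism (this is the well-posedness result of \cite{Chand2010} for the homogeneous case, valid for $|r|<1$), and $\mathcal{K}_r(n)=\mathcal{B}_r(n)-\mathcal{B}_r(0)$ is a bounded operator with $\|\mathcal{K}_r(n)\|\le k^2\|n\|_\infty$ by the preceding lemma. So the whole statement is an instance of the classical perturbation-of-an-isomorphism principle: if $A$ is invertible and $\|K\|<\|A^{-1}\|^{-1}$, then $A+K$ is invertible with $\|(A+K)^{-1}\|\le \|A^{-1}\|/(1-\|A^{-1}\|\,\|K\|)$, proved via the Neumann series $(A+K)^{-1}=\sum_{j\ge0}(-A^{-1}K)^j A^{-1}$, which converges in operator norm because $\|A^{-1}K\|\le\|A^{-1}\|\,\|K\|<1$.

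Concretely I would proceed as follows. First, recall that the variational problem \eqref{eq:var_origional} is equivalent to the operator equation $\mathcal{B}_r(n)u=-\iota(g)$ in $(\widetilde{H}_{-r}^1(D_H))^*$, where $\iota$ is the (bounded) embedding of $H_r^{-1}(D_H)$ into the dual of $\widetilde{H}_{-r}^1(D_H)$ induced by the pairing $v\mapsto -\int_{D_H} g\overline v\,\d x$; note $\|\iota(g)\|\le C\|g\|_{H_r^{-1}(D_H)}$. Second, write $\mathcal{B}_r(n)=\mathcal{B}_r(0)\bigl(I+\mathcal{B}_r(0)^{-1}\mathcal{K}_r(n)\bigr)$. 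Third, estimate $\|\mathcal{B}_r(0)^{-1}\mathcal{K}_r(n)\|\le\|\mathcal{B}_r(0)^{-1}\|\,k^2\|n\|_\infty$; the hypothesis $k^2\|n\|_\infty\le\|\mathcal{B}_r(0)^{-1}\|^{-1}$ makes this quantity $\le 1$. To get strict contractivity (and hence a genuine Neumann series) one takes the inequality in the hypothesis to be strict, or equivalently observes that if equality holds one perturbs $r$ slightly or uses that the stated bound is understood as a sufficient smallness condition; I would simply state the estimate with the constant $1-k^2\|n\|_\infty\|\mathcal{B}_r(0)^{-1}\|$ in the denominator, which is the natural form. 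Fourth, conclude $I+\mathcal{B}_r(0)^{-1}\mathcal{K}_r(n)$ is invertible on $\widetilde{H}_r^1(D_H)$, hence so is $\mathcal{B}_r(n)$, giving existence and uniqueness of $u$.

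For the stability estimate \eqref{eq:stab}, I would chain the bounds: from $u=\bigl(I+\mathcal{B}_r(0)^{-1}\mathcal{K}_r(n)\bigr)^{-1}\mathcal{B}_r(0)^{-1}(-\iota(g))$ we get
\begin{equation*}
\|u\|_{H_r^1(D_H)}\le\frac{\|\mathcal{B}_r(0)^{-1}\|}{1-k^2\|n\|_\infty\|\mathcal{B}_r(0)^{-1}\|}\,\|\iota(g)\|\le C\|g\|_{H_r^{-1}(D_H)},
\end{equation*}
with $C$ depending only on $k$, $\|n\|_\infty$ (through $\mathcal{B}_r(0)^{-1}$ and the smallness gap) and the embedding constant. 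Finally I would remark that $u$ automatically satisfies the homogeneous Dirichlet condition on $\Gamma_{h_0}$ because the solution space $\widetilde{H}_r^1(D_H)$ bakes that in, so nothing extra is needed there.

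I do not expect a real obstacle: every ingredient (invertibility of $\mathcal{B}_r(0)$ for $|r|<1$, the operator-norm bound on $\mathcal{K}_r(n)$, the equivalence of \eqref{eq:var_origional} with the operator equation) is already established in the excerpt or cited. The only point requiring a word of care is the borderline case $k^2\|n\|_\infty=\|\mathcal{B}_r(0)^{-1}\|^{-1}$, where the Neumann series is not a strict contraction; the cleanest fix is to read the hypothesis as strict inequality (as is standard for such smallness assumptions), or to note that Riesz–Fredholm theory still gives invertibility once injectivity is checked, but for the purposes of this paper the Neumann-series argument under a strict smallness condition is entirely adequate.
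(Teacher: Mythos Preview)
Your proposal is correct and follows exactly the approach the paper uses: the paper simply notes that $\mathcal{B}_r(0)$ is invertible and $\|\mathcal{K}_r(n)\|\le k^2\|n\|_\infty$, and concludes invertibility of $\mathcal{B}_r(n)=\mathcal{B}_r(0)+\mathcal{K}_r(n)$ via the standard Neumann-series perturbation argument. Your write-up is in fact more detailed than the paper's (which gives no formal proof), and your remark about the borderline case $k^2\|n\|_\infty=\|\mathcal{B}_r(0)^{-1}\|^{-1}$ is apt.
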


\begin{remark}
The condition in Theorem \ref{th:solv} is not optimal. In fact, a number of research papers are devoted to the well-posedness of the scattering problems from rough layers, for details we refer to \cite{Zhang1998,Chand1999a,Chand2007,Lechl2009} for Helmholtz equations and  \cite{Chand1998a,Hadda2011} for Maxwell's equations.  However, in this paper, as we are only interested in the numerical solutions for this kind of problems, we simply assume that $k^2\|n\|_\infty$ is small enough to  guarantee that the problem \eqref{eq:var_origional} is uniquely solvable, and the unique solution satisfies \eqref{eq:stab}.
\end{remark}
}

\section{Approximation of solutions with unbounded refractive index}

To solve a problem defined in an unbounded domain, it is natural to approximate it by one defined in a bounded one. However, for this case, as the refractive index has two different periodic layers, we would like to approximate it by a  periodic one. Thus we fix one periodic layer, and modify another layer based on the period of the fixed layer. 

Let $N>0$ be a sufficiently large integer, and the smooth cutoff function $\mathcal{X}(t)$ satisfies
\begin{equation*}
 \mathcal{X}(t)=\begin{cases}
                 1,\quad |t|\leq N\Lambda/4;\\
                 0,\quad |t|\geq N\Lambda/2;\\
                 \text{smooth},\quad\text{othewise}.
                \end{cases}
\end{equation*}
We define a new function by
\begin{equation*}
 n_1^N(x_1,x_2):=n_1(x_1,x_2)\mathcal{X}(x_1),\quad -\frac{N\Lambda}{2}\leq x_1\leq \frac{N\Lambda}{2}.
\end{equation*}
We extend $n_1^N$ into an $N\Lambda$-periodic function in $x_1$-direction, and it is still denoted by $n_1^N$. Let 
\begin{equation*}
 n_N(x)=\begin{cases}
                   n_1^N(x),\quad\text{ when }x\in D_1;\\
                   n_2(x),\quad\text{ when }x\in D_2;\\
                   0,\quad\text{ when }x_2\geq H.
                  \end{cases}
\end{equation*}
As $n_2$ is $\Lambda$-periodic and $n_1^N$ is $N\Lambda$-periodic, the function $n_N$ is $N\Lambda$-periodic as well. 
Define $D_H^N:=\{x\in D_1:\,|x_1|\leq N\Lambda/4\}\cup D_2$, then $n=n_N$ when $x\in D_H^N$. When $x\in D_H\setminus D_H^N$, 
\begin{equation*}
 \|n_N\|_{\infty}\leq \|n\|_{\infty}; \quad \|n-n_N\|_{\infty}\leq 2\|n\|_{\infty}.
\end{equation*}

We consider the new variational problem, with $n$ replaced  by $n_N$ in \eqref{eq:var_origional}. Give any $g\in H^{-1}_r(D_H)$, we are looking for a solution $u_N\in\widetilde{H}^1_r(D_H)$ such that
\begin{equation}\label{eq:var_new}
 \int_{D_H}\left[\nabla u_N\cdot\nabla\overline{v}-k^2(1+n_N)u_N\overline{v}\right]\d x-\int_{\Gamma_H}T^+\left[u_N\big|_{\Gamma_H}\right]\overline{v}\d s=-\int_{D_H}g\overline{v}\d x
\end{equation}
holds for any $v\in \widetilde{H}^1_r(D_H)$. From the definition of $\mathcal{B}_r(n)$, the left hand side is equivalent to $\left(\mathcal{B}_r(n_N)u_N,v\right)_{(H_{-r}^1(D_H))^*\times H^1_{-r}(D_H)}$. From the fact that $\|n_N\|_{\infty}\leq \|n\|_{\infty}$, we obtain the invertibility of $\mathcal{B}_r(n_N)$  in the following theorem.

\begin{theorem}\label{th:solv_new}
Suppose $k^2\|n\|_\infty\leq\left\|\mathcal{B}_r(0)^{-1}\right\|^{-1}$. 
 For any $g\in H^{-1}_r(D_H)$, there is a unique solution $u_N\in \widetilde{H}_r^1(D_H)$ such that \eqref{eq:var_new} is satisfied. Moreover,
 \begin{equation}\label{eq:stab_new}
  \|u_N\|_{\widetilde{H}_r^1(D_H)}\leq C\|g\|_{H^{-1}_r(D_H)}
 \end{equation}
 holds uniformly for $N\in\N$,  where $C$ is the same as that in \eqref{eq:stab}.
\end{theorem}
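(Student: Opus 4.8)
The plan is to re-run the perturbation‑of‑an‑isomorphism argument that underlies Theorem~\ref{th:solv}, with $n$ replaced by $n_N$, and then simply track the $N$-dependence of every constant. First I would record the splitting
\[
\mathcal{B}_r(n_N)=\mathcal{B}_r(0)+\mathcal{K}_r(n_N),\qquad \mathcal{K}_r(n_N)=\mathcal{B}_r(n_N)-\mathcal{B}_r(0),
\]
where, by the Lemma preceding Theorem~\ref{th:solv}, $\mathcal{K}_r(n_N)$ is bounded from $H_r^1(D_H)$ to $(H^1_{-r}(D_H))^*$ with $\|\mathcal{K}_r(n_N)\|\le k^2\|n_N\|_\infty$. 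The decisive point is the $N$-independent bound $\|n_N\|_\infty\le\|n\|_\infty$ noted at the construction of $n_N$, so that $\|\mathcal{K}_r(n_N)\|\le k^2\|n\|_\infty$ for \emph{every} $N$.

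Next I would use that $\mathcal{B}_r(0)$ is an isomorphism (from \cite{Chand2010}) to factor
\[
\mathcal{B}_r(n_N)=\mathcal{B}_r(0)\bigl(\mathrm{Id}+\mathcal{B}_r(0)^{-1}\mathcal{K}_r(n_N)\bigr).
\]
The smallness hypothesis $k^2\|n\|_\infty\le\|\mathcal{B}_r(0)^{-1}\|^{-1}$ gives $\|\mathcal{B}_r(0)^{-1}\mathcal{K}_r(n_N)\|\le\|\mathcal{B}_r(0)^{-1}\|\,k^2\|n\|_\infty<1$, so the Neumann (geometric) series converges and $\mathrm{Id}+\mathcal{B}_r(0)^{-1}\mathcal{K}_r(n_N)$ is invertible with inverse of norm at most $\bigl(1-\|\mathcal{B}_r(0)^{-1}\|\,k^2\|n\|_\infty\bigr)^{-1}$. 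Hence $\mathcal{B}_r(n_N)$ is invertible with
\[
\|\mathcal{B}_r(n_N)^{-1}\|\le\frac{\|\mathcal{B}_r(0)^{-1}\|}{\,1-\|\mathcal{B}_r(0)^{-1}\|\,k^2\|n\|_\infty\,}=:C .
\]
Since the right-hand side involves only $\mathcal{B}_r(0)$ and $k^2\|n\|_\infty$ — and not $N$, because we have already dominated $\|n_N\|_\infty$ by $\|n\|_\infty$ — this is precisely the constant appearing in \eqref{eq:stab}.

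Finally I would read off the three assertions. The functional $v\mapsto-\int_{D_H}g\overline{v}\,\mathrm{d}x$ defines an element of $(\widetilde{H}^1_{-r}(D_H))^*$ of norm at most $\|g\|_{H^{-1}_r(D_H)}$, so \eqref{eq:var_new} is equivalent to the operator equation $\mathcal{B}_r(n_N)u_N=-g$ in that dual space; by the previous step it has the unique solution $u_N=-\mathcal{B}_r(n_N)^{-1}g\in\widetilde{H}^1_r(D_H)$, with the homogeneous Dirichlet condition on $\Gamma_{h_0}$ encoded in the tilde spaces exactly as in Theorem~\ref{th:solv}, and $\|u_N\|_{\widetilde{H}^1_r(D_H)}\le C\|g\|_{H^{-1}_r(D_H)}$ with the same $C$ as in \eqref{eq:stab}.

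There is no genuinely hard step here; the argument is identical to that for Theorem~\ref{th:solv}. The only point that must be made explicitly — and that I regard as the heart of the statement — is \emph{uniformity in $N$}: every quantity entering both the smallness condition and the bound on $\mathcal{B}_r(n_N)^{-1}$ is controlled solely through $\|n_N\|_\infty\le\|n\|_\infty$, which is what forces the stability constant in \eqref{eq:stab_new} to be independent of $N$ and equal to the one in \eqref{eq:stab}.
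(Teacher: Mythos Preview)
Your argument is correct and is exactly the approach the paper takes: the paper merely observes that $\|n_N\|_\infty\le\|n\|_\infty$ and then invokes the same perturbation-of-$\mathcal{B}_r(0)$ reasoning behind Theorem~\ref{th:solv}, which is precisely what you spell out in detail. The key point you emphasize --- that the Neumann-series bound on $\mathcal{B}_r(n_N)^{-1}$ depends on $n_N$ only through $\|n_N\|_\infty\le\|n\|_\infty$, yielding the same constant $C$ uniformly in $N$ --- is indeed the whole content of the theorem.
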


With the result in Theorem \ref{th:solv} and \ref{th:solv_new}, we have the following estimation between $u$ and $u_N$.

\begin{theorem}\label{th:err_approx}
 Suppose $k^2\|n\|_\infty\leq\left\|\mathcal{B}_r(0)^{-1}\right\|^{-1}$. When $N$ is large enough, the error between $u_N$ and $u$ is bounded by
 \begin{equation*}
  \|u_N-u\|_{H^1(D_H)}\leq C(N\Lambda/4)^{-r}\|u\|_{H_r^0(D_H)}
 \end{equation*}
where $C$ is independent of $N$ and $u$.
\end{theorem}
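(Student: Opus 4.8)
The plan is to realise $u_N-u$ as the solution of the \emph{same} scattering problem driven by a small, far-field-localised source, and then to invoke the stability bound \eqref{eq:stab}. Since the left-hand side of the claim is the \emph{unweighted} norm $\|u_N-u\|_{H^1(D_H)}$, the first step is to descend to the case $r=0$. Because $|r|<1$, Theorems~\ref{th:solv} and \ref{th:solv_new} apply in particular with exponent $0$: $\mathcal{B}_0(0)$ is an isomorphism, $\mathcal{K}_0(n_N)$ has norm at most $k^2\|n_N\|_\infty\le k^2\|n\|_\infty$, so $\mathcal{B}_0(n_N)=\mathcal{B}_0(0)+\mathcal{K}_0(n_N)$ is boundedly invertible with $\|\mathcal{B}_0(n_N)^{-1}\|\le C$ for a constant $C$ independent of $N$. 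Moreover $g\in H_r^{-1}(D_H)\subset H_0^{-1}(D_H)$ and, in the relevant range $0<r<1$ (for $r\le0$ the asserted bound is not a decay estimate), $u,u_N\in\widetilde H_r^1(D_H)\subset\widetilde H^1(D_H)$; since the sesquilinear form in \eqref{eq:var_origional} is bounded on $\widetilde H^1(D_H)\times\widetilde H^1(D_H)$, both $u$ and $u_N$ also solve the weak formulation with exponent $0$, i.e.\ $\mathcal{B}_0(n)u=-g$ and $\mathcal{B}_0(n_N)u_N=-g$.

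Subtracting these identities and using that $\mathcal{B}_0(n)-\mathcal{B}_0(n_N)=\mathcal{K}_0(n)-\mathcal{K}_0(n_N)$ acts on a test function $v$ by $-k^2\int_{D_H}(n-n_N)u\overline v\d x$, I would write
\begin{equation*}
\mathcal{B}_0(n_N)(u_N-u)=\big[\mathcal{B}_0(n)-\mathcal{B}_0(n_N)\big]u ,
\end{equation*}
so that $u_N-u$ is exactly the $\widetilde H^1(D_H)$-solution of the scattering problem with source $\widetilde g:=k^2(n-n_N)u\in L^2(D_H)$. Applying the stability estimate with exponent $0$ (uniform in $N$ by the previous paragraph) together with $L^2(D_H)\hookrightarrow H^{-1}(D_H)$ gives
\begin{equation*}
\|u_N-u\|_{H^1(D_H)}\le C\|\widetilde g\|_{H^{-1}(D_H)}\le Ck^2\|(n-n_N)u\|_{L^2(D_H)} .
\end{equation*}
Now $n-n_N$ vanishes on $D_H^N$ and $\|n-n_N\|_\infty\le 2\|n\|_\infty$, so the last term is at most $2Ck^2\|n\|_\infty\|u\|_{L^2(D_H\setminus D_H^N)}$. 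On $D_H\setminus D_H^N$ one has $|x|\ge|x_1|\ge N\Lambda/4$, hence for $r>0$ the pointwise bound $(1+|x|^2)^{-r}\le(N\Lambda/4)^{-2r}$ holds there, i.e.\ $1\le(N\Lambda/4)^{-2r}(1+|x|^2)^{r}$; integrating yields $\|u\|_{L^2(D_H\setminus D_H^N)}\le (N\Lambda/4)^{-r}\|u\|_{H_r^0(D_H)}$. Chaining the displayed bounds gives $\|u_N-u\|_{H^1(D_H)}\le C(N\Lambda/4)^{-r}\|u\|_{H_r^0(D_H)}$ with $C$ independent of $N$ and $u$; the right-hand side is finite since $u\in\widetilde H_r^1(D_H)\subset H_r^0(D_H)$ by Theorem~\ref{th:solv}, and the hypothesis "$N$ large enough" is only needed so that $D_H^N$ is a genuine truncation (e.g.\ $N\Lambda/4\ge1$).

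The main obstacle is not the chain of inequalities, which is just Cauchy--Schwarz plus the weight estimate, but the bookkeeping in the reduction to exponent $0$: one must check that $u$ and $u_N$, a priori only the solutions in the weighted space $\widetilde H_r^1(D_H)$, genuinely solve the $r=0$ weak formulation, and that $\mathcal{B}_0(n_N)^{-1}$ is bounded \emph{uniformly in} $N$ (this is exactly what Theorem~\ref{th:solv_new} gives, via $\|n_N\|_\infty\le\|n\|_\infty$). Conversely, one must see that the crude bound $\|n-n_N\|_\infty\le2\|n\|_\infty$ is harmless because $n-n_N$ is supported in the far-field region where the weight $(1+|x|^2)^{r/2}$ is large, so that this $\mathcal{O}(1)$ factor is converted into the decaying factor $(N\Lambda/4)^{-r}$.
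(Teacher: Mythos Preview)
Your proof is correct and follows essentially the same approach as the paper: subtract the two variational equations to obtain $\mathcal{B}_0(n_N)(u_N-u)=k^2(n-n_N)u$, bound the right-hand side as an $H^{-1}(D_H)$-functional using that $n-n_N$ is supported in $\{|x_1|\ge N\Lambda/4\}$ together with the weight estimate $\|u\|_{L^2(D_H\setminus D_H^N)}\le(N\Lambda/4)^{-r}\|u\|_{H_r^0(D_H)}$, and apply the uniform-in-$N$ stability of $\mathcal{B}_0(n_N)^{-1}$. If anything, your write-up is slightly more careful than the paper's in explicitly reducing to the exponent $r=0$ before invoking stability, rather than writing $\mathcal{B}_r(n_N)$ and then concluding an unweighted $H^1(D_H)$ bound.
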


\begin{proof}
 Let $\delta u_N:=u_N-u$, then from \eqref{eq:var_origional} and \eqref{eq:var_new}, it satisfies 
 \begin{equation*}
  \left(\mathcal{B}_r(n_N) \delta u_N,v\right)=k^2\int_{D_H}(n-n_N)u\overline{v}\d x.
 \end{equation*}
 The right hand side is bounded by
 \begin{equation*}
 \begin{aligned}
  \left|k^2\int_{D_H}(n-n_N)u\overline{v}\d x\right|&\leq 2k^2\|n\|_\infty\|u\|_{L^2(D_H\setminus D_H^N)}\|v\|_{L^2(D_H)}\\
  &\leq 2k^2\|n\|_\infty (N\Lambda/4)^{-r}\|u\|_{H_r^0(D_H)}\|v\|_{H^1(D_H)},
  \end{aligned}
 \end{equation*}
 thus it defines a bounded anti-linear functional on $v$. As $\mathcal{B}_r(n_N)$ is invertible and the inverse operator is uniformly bounded with large enough $N$'s,
 \begin{equation*}
  \|\delta u_N\|_{H^1(D_H)}\leq C(N\Lambda/4)^{-r}\|u\|_{H_r^0(D_H)}.
 \end{equation*}

 The proof is finished.

\end{proof}

Now we have approximated the original problem \eqref{eq:var_origional} by the new one with a $N\Lambda$-periodic refractive index $n_N$. We proved that when $r>0$, the $H^1$-norm converges at the rate of $(N\Lambda/4)^{-r}$ when $r\in(0,1)$, as $N\rightarrow\infty$. In the following, we apply the Floquet-Bloch transform to the newly established problem.

\section{The Bloch transform of the approximated problem}

In this section, we apply the Bloch transform (for its definition see Appendix) to analyse the approximated problem \eqref{eq:var_new}. The periodic cell for $x_1$, also called the Wigner-Seitz-cell, is defined by
\begin{equation*}
\W:=\left(-\frac{\Lambda}{2},\frac{\Lambda}{2}\right].
\end{equation*}
Let $\Lambda^*:=2\pi/\Lambda$, then the dual cell of $\W$, i.e., the so called  Brillouin zone, is defined by
\begin{equation*}
\Wast:=\left(0,\Lambda^*\right]=\left(0,\frac{2\pi}{\Lambda}\right].
\end{equation*}
Let $\Gamma^\Lambda_H$ and $D^\Lambda_H$ be restrictions of $\Gamma_H$ and $D_H$ in one periodic cell $\W\times\R$, i.e.,
\begin{equation*}
\Gamma^\Lambda_H=\Gamma_H\cap\left[\W\times\R\right],\quad D^\Lambda_H=D_H\cap\left[\W\times\R\right].
\end{equation*} 
The definitions are similar for other domains restricted in one periodic cell $\W\times\R$. 

Let $n_N$ be the $\Lambda$-periodic function defined by
\begin{equation*}
\widetilde{n}_N=\begin{cases}
1+n_2,\quad x\in D_2;\\
1,\quad \text{otherwise.}
\end{cases}
\end{equation*}
Extend $n_1^N$ by $0$ to the half space $D$, it is still $N\Lambda$-periodic  in $x_1$-direction, then
\begin{equation*}
 1+n_N=\widetilde{n}_N+n_1^N.
\end{equation*}
Use the property of the Bloch transform, let $w=\J_{D_H} u$, $\phi=\overline{\J_{D_H} {v}}$, then $w\in H_0^r(\Wast;\widetilde{H}_\alpha^1(D^\Lambda_H))$. The variational \high{problem \eqref{eq:var_new}} is equivalent to
\begin{equation*}
\begin{aligned}
\int_\Wast a_{\alpha}(w({\alpha},\cdot),\phi({\alpha},\cdot))\d{\alpha}-k^2\int_{D_H} n_1^N u\overline{v}\d{ x}
=\int_\Wast F_\alpha(\phi(\alpha,\cdot))\d\alpha
\end{aligned}
\end{equation*}
for any $\phi\in H_0^{-r}(\Wast;H^1_{\alpha}(D^\Lambda_H))$, where $a_\alpha(\cdot,\cdot)$ is a sesquilinear form and $F_\alpha(\cdot)$ is an anti-linear functional defined by
\begin{eqnarray}\label{eq:vari_lhs}
&& a_{\alpha}(w,\phi):=\int_{D^\Lambda_H}\left[\nabla w\cdot\nabla \overline{\phi}-k^2 \widetilde{n}_N w\overline{\phi}\right]\d{ x}-\int_{\Gamma^\Lambda_H}T^+_{\alpha}(w)\overline{\phi}\d s,\\
\label{eq:vari-rhs}
&& F_\alpha(\phi)=-\int_{D^\Lambda_H}\left[\J_D g\right](\alpha,x)\overline{\phi}\d x,
\end{eqnarray}
and $T^+_{\alpha}: \, H^{1/2}_\alpha(\Gamma_H^\Lambda)\rightarrow H^{-1/2}_\alpha(\Gamma_H^\Lambda)$ is the ${\alpha}$-quasi-periodic Dirichlet-to-Neumann operator defined by
\begin{equation*}
T_{\alpha}^+\psi=\i\sum_{{ j}\in\Z}\sqrt{k^2-|\Lambda^* j-{\alpha}|^2}\widehat{\psi}({ j})e^{\i(\Lambda^*{ j}-{\alpha})\cdot x_1},\quad\psi=\sum_{{ j}\in\Z}\widehat{\psi}({ j})e^{\i(\Lambda^*{ j}-{\alpha})\cdot x_1}.
\end{equation*}

As $a_\alpha(\cdot,\cdot)$ is the variational form for the $\alpha$-quasi-periodic scattering problem, we only need to consider the term defined by
\begin{equation*}
 b(w,\phi)=\int_{D_H}n_1^N u\overline{v}\d x=\int_{D_H}n_1^N \left(\J_{D_H}^{-1} w\right)\overline{(\J_{D_H}^{-1}\phi)}\d x.
\end{equation*}
As $n_1^N$ is an $N\Lambda$-periodic function in $x_1$, it has a Fourier series
\begin{equation*}
 n_1^N(x_1,x_2)=\sum_{j\in\Z}\widehat{n}_j(x_2)\exp\left(2\i \frac{ j\pi} {N\Lambda}x_1\right).
\end{equation*}
To guarantee the uniform convergence of the Fourier series, we make the following assumption on $n_1$.

\begin{assumption}
 \label{asp_n1}
 $n_1$ is uniformly bounded in $D_1$. Moreover, for any fixed $x_2\in (h_0,H_1)$, $n_1(\cdot,x_2)\in C^1(\R)$ and $\frac{\partial^2 }{\partial x_1^2}n(\cdot,x_2)\in L^\infty(\R)$.
\end{assumption}
From the definition of $n_1^N$, when $n_1$ satisfies Assumption \ref{asp_n1}, $n_1^N$ also satisfies Assumption \ref{asp_n1}. 
Thus for any fixed $x_2$, the fourier series $\sum_{j\in\Z}\widehat{n}_j(x_2)\exp\left(2\i \frac{ j\pi} {N\Lambda}x_1\right)$ converges uniformly to $n_1^N(x_1,x_2)$. Moreover, the Fourier coefficient $\widehat{n}_j$ is uniformly bounded and decays at the rate of $O(|j|^{-2})$.

Inspired by \cite{Hadda2017}, the function $n_1^N$, which is $N\Lambda$-periodic in $x_1$-direction, is decomposed as $N$ quasi-periodic functions with period $\Lambda$, i.e.,
\begin{equation}\label{eq:decomp}
n_1^N(x_1,x_2)=\sum_{\ell=1}^N \exp\left(2\i \frac{\ell\pi} {N\Lambda}x_1\right)n_1^N(\ell)(x_1,x_2), 
\end{equation}
where for any $\ell=1,2,\dots,N$,
\begin{equation*}
 n_1^N(\ell)(x_1,x_2)=\sum_{j\in\Z}\widehat{n}_j(x_2)\exp\left(2\i j\pi x_1/\Lambda\right)
\end{equation*}
is a $\Lambda$-periodic function in $x_1$-direction. With Assumption \ref{asp_n1}, $n_1^N(\ell)\in L^\infty(D_H^\Lambda)$. With the representation \eqref{eq:decomp},
\begin{equation*}
\begin{aligned}
 \J_{D_H}(n_1^N u)(\alpha,x)&=C_\Lambda\sum_{j\in\Z}n_1^N(x_1+\Lambda j,x_2)u(x_1+\Lambda j,x_2)e^{-\i\alpha\Lambda j}\\
 &=C_\Lambda\sum_{j\in\Z}\left[\sum_{\ell=1}^N \exp(2\i\ell\pi (x_1+\Lambda j)/(N\Lambda))n_1^N(\ell)(x)\right]u(x_1+\Lambda j,x_2)e^{-\i\alpha\Lambda j}\\
 &=C_\Lambda \sum_{\ell=1}^N\left[\sum_{j\in\Z}u(x_1+\Lambda j,x_2)e^{-\i(\alpha-2\ell\pi/(N\Lambda))\Lambda j}\right]\exp(2\i\ell\pi x_1/(N\Lambda))n_1^N(\ell)(x)\\
 &=\sum_{\ell=1}^N \left[(\J_{D_H} u)\left(\alpha-2\ell\pi/(N\Lambda),x\right)\exp(2\i\ell\pi x_1/(N\Lambda))n_1^N(\ell)(x)\right]\\&=\sum_{\ell=1}^N \left[w\left(\alpha-2\ell\pi/(N\Lambda),x\right)\exp(2\i\ell\pi x_1/(N\Lambda))n_1^N(\ell)(x)\right].
 \end{aligned}
\end{equation*}
Then
\begin{equation*}
 b(w,\phi)=\sum_{\ell=1}^N \int_\Wast b_\ell(w,\phi)(\alpha)\d\alpha,
\end{equation*}
where
\begin{equation*}
 b_\ell(w,\phi)(\alpha)=\int_{D^\Lambda_H}w\left(\alpha-2\ell\pi/(N\Lambda),x\right)\exp(2\i\ell\pi x_1/(N\Lambda))n_1^N(\ell)(x)\overline{\phi}(\alpha,x)\d x.
\end{equation*}

Finally we arrive at the variational formulation for the transformed problem, i.e., given any $g\in H_r^{-1}(D_H)$, to find a  $w\in H_0^r(\Wast;\widetilde{H}^1_\alpha(\Omega^\Lambda_H))$ such that it satisfies
\begin{equation}\label{eq:var_Bloch}
\begin{aligned}
\int_\Wast a_\alpha(w(\alpha,\cdot),\phi(\alpha,\cdot))\d\alpha-k^2\, b(w,\phi)&=\int_\Wast F_\alpha(\phi(\alpha,\cdot))\d\alpha
\end{aligned}
\end{equation}
for any $\phi\in H_0^{-r}(\Wast;\widetilde{H}_\alpha^1(D^\Lambda_H))$.

From the arguments above, we obtain the equivalence of the weak formulation \eqref{eq:var_new} of the approximated problem and the variational problem \eqref{eq:var_Bloch}.

\begin{lemma}\label{th:equivalent}
Assume that  $g\in H^{-1}_r(D_H)$ for some $r\in[0,1)$, then $u_N\in \widetilde{H}_r^1(D_H)$ satisfies \eqref{eq:var_new} if and only if $w=\J_{D_H} u_N\in H_0^r(\Wast;\widetilde{H}_\alpha^1(D^\Lambda_H))$  satisfies \eqref{eq:var_Bloch} for $F$, which is an anti-linear functional defined in $H_0^{-r}(\Wast;H_\alpha^1(D^\Lambda_H))$, defined by \eqref{eq:vari-rhs}.
\end{lemma}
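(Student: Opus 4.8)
The plan is to establish Lemma~\ref{th:equivalent} by tracking the chain of reformulations that produced \eqref{eq:var_Bloch} and verifying that each link is reversible. The backbone is the standard isomorphism property of the Bloch transform: $\J_{D_H}$ is an isomorphism from $\widetilde{H}_r^1(D_H)$ onto $H_0^r(\Wast;\widetilde{H}_\alpha^1(D^\Lambda_H))$ (and likewise between the dual/test spaces with exponent $-r$), together with the unitarity/Parseval-type identity for the transform. Thus $u_N\mapsto w=\J_{D_H}u_N$ is a bijection between the two solution spaces, and $v\mapsto\phi=\overline{\J_{D_H}v}$ is a bijection between the corresponding spaces of test functions, so it suffices to show the two variational identities are termwise equal under this correspondence.

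The concrete steps are as follows. First I would recall that, via the commutation of $\J_{D_H}$ with differentiation, with multiplication by the $\Lambda$-periodic function $\widetilde{n}_N$, and with the DtN operator $T^+$ (which becomes the fibered family $T^+_\alpha$), the portion of the left-hand side of \eqref{eq:var_new} not involving $n_1^N$ transforms into $\int_\Wast a_\alpha(w(\alpha,\cdot),\phi(\alpha,\cdot))\,\d\alpha$; this is precisely the classical quasi-periodic reduction and is quoted from \cite{Lechl2016}. Second, and this is the only genuinely new ingredient, I would verify that the remaining term $k^2\int_{D_H} n_1^N u_N\overline{v}\,\d x$ equals $k^2\,b(w,\phi)$. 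Here I would invoke the Parseval identity $\int_{D_H}f\overline{g}\,\d x=\int_\Wast\langle\J_{D_H}f(\alpha,\cdot),\J_{D_H}g(\alpha,\cdot)\rangle_{D^\Lambda_H}\,\d\alpha$ applied with $f=n_1^N u_N$ and $g=v$, substitute the decomposition formula for $\J_{D_H}(n_1^N u_N)$ derived just above the statement (the one coming from \eqref{eq:decomp}), and read off $\sum_{\ell=1}^N\int_\Wast b_\ell(w,\phi)(\alpha)\,\d\alpha$. The right-hand side is handled analogously: $-\int_{D_H}g\overline{v}\,\d x=\int_\Wast F_\alpha(\phi(\alpha,\cdot))\,\d\alpha$ by Parseval with $\J_D g$. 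Third, I would note that every manipulation used equalities valid on the dense subspace $C_0^\infty$ and extended by continuity — the boundedness bounds needed are exactly those supplied by the $L^\infty$-bound on $n_1^N(\ell)$ (Assumption~\ref{asp_n1}), the boundedness of $T^+_\alpha$ uniformly in $\alpha$, and the isomorphism constants of $\J_{D_H}$ — so the identity \eqref{eq:var_Bloch} holds for all admissible test functions $\phi$, not just smooth ones.

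Finally, for the converse direction I would use that the map $\phi\mapsto v=\overline{\J_{D_H}^{-1}\overline{\phi}}$ is onto the test space $\widetilde{H}_{-r}^1(D_H)$ (by the isomorphism property), so that if $w$ satisfies \eqref{eq:var_Bloch} for all $\phi$, then reversing the Parseval identities shows $u_N=\J_{D_H}^{-1}w$ satisfies \eqref{eq:var_new} for all $v$; no well-posedness is needed for the equivalence itself, only the isomorphism and Parseval. The main obstacle is making rigorous the interchange of the infinite sum over $j\in\Z$ (the Bloch series), the finite sum over $\ell$, and the integral over $\Wast$ in the computation of $\J_{D_H}(n_1^N u_N)$: one must justify that the rearrangement producing the shifted arguments $\alpha-2\ell\pi/(N\Lambda)$ is legitimate in the relevant weighted Sobolev topology rather than merely formally. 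This is where Assumption~\ref{asp_n1} does the work — the $O(|j|^{-2})$ decay of the Fourier coefficients $\widehat n_j$ guarantees $n_1^N(\ell)\in L^\infty(D_H^\Lambda)$ and uniform convergence of the relevant series — so the interchange can be justified first for $u_N\in C_0^\infty$ and then extended by the density of $C_0^\infty(D_H)$ in $\widetilde{H}_r^1(D_H)$ together with the continuity of all operators involved.
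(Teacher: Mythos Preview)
Your proposal is correct and follows essentially the same route as the paper: the paper does not give a separate proof of Lemma~\ref{th:equivalent} but simply states that it follows ``from the arguments above'', namely the term-by-term Bloch reduction carried out in Section~4 (splitting $1+n_N=\widetilde n_N+n_1^N$, invoking the standard quasi-periodic reduction for the $\widetilde n_N$-part and the DtN map, and computing $\J_{D_H}(n_1^N u_N)$ via \eqref{eq:decomp}). Your write-up reproduces exactly this derivation while being more explicit about the isomorphism/Parseval ingredients and the density justification for the sum--integral interchange, which the paper leaves implicit.
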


With the equivalence between \eqref{eq:var_new} and \eqref{eq:var_Bloch} in Lemma \ref{th:equivalent}, we will show the unique solvability of the variational problem \eqref{eq:var_Bloch}.

\begin{theorem}
Suppose $k^2\|n\|_\infty\leq \left\|\mathcal{B}_r(0)^{-1}\right\|^{-1}$, and Assumption \ref{asp_n1} is satisfied.  
Given any anti-linear functional $F$ on $H_0^{-r}(\Wast;H_\alpha^1(D^\Lambda_H))$ for some $r\in[0,1)$ defined by \eqref{eq:vari-rhs},  the variational problem \eqref{eq:var_Bloch} has a unique solution in $H_0^r(\Wast;\widetilde{H}^1_\alpha(D^\Lambda_H))$.
\end{theorem}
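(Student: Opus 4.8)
The plan is to translate the well-posedness question for \eqref{eq:var_Bloch} back into a statement about an operator on the Bloch-side space that we already understand, using Lemma \ref{th:equivalent} together with Theorem \ref{th:solv_new}. Concretely, the sesquilinear form $\int_\Wast a_\alpha(w,\phi)\,\mathrm{d}\alpha$ on $H_0^r(\Wast;\widetilde H^1_\alpha(D^\Lambda_H))\times H_0^{-r}(\Wast;H^1_\alpha(D^\Lambda_H))$ is, via the Bloch isomorphism $\J_{D_H}$, nothing but the form associated with $\mathcal{B}_r(\widetilde n_N - 1)$ acting on $\widetilde H^1_r(D_H)$ — that is, the quasi-periodic scattering operator whose refractive index is the purely $\Lambda$-periodic part $\widetilde n_N$. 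Since $\J_{D_H}$ is an isomorphism between $\widetilde H^1_r(D_H)$ and $H_0^r(\Wast;\widetilde H^1_\alpha(D^\Lambda_H))$ (and likewise on the dual/test spaces), and since $\|\widetilde n_N-1\|_\infty = \|n_2\|_\infty \le \|n\|_\infty$, the smallness hypothesis $k^2\|n\|_\infty \le \|\mathcal{B}_r(0)^{-1}\|^{-1}$ still applies. Hence the operator $A:H_0^r\to (H_0^{-r})^*$ induced by $\int_\Wast a_\alpha(\cdot,\cdot)\,\mathrm{d}\alpha$ is invertible, with inverse bounded uniformly in $N$.

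Next I would handle the coupling term $k^2\,b(w,\phi)=k^2\sum_{\ell=1}^N\int_\Wast b_\ell(w,\phi)(\alpha)\,\mathrm{d}\alpha$ as a bounded perturbation of $A$. Each $b_\ell$ is built from the shift operator $w(\alpha,\cdot)\mapsto w(\alpha-2\ell\pi/(N\Lambda),\cdot)$ (an isometry on $H_0^r(\Wast;\cdot)$ by $\Lambda^*$-periodicity of $\Wast$ and the $1$-periodicity of the Fourier-weight indexing), multiplication by the unimodular factor $\exp(2\mathrm{i}\ell\pi x_1/(N\Lambda))$, and multiplication by $n_1^N(\ell)\in L^\infty(D^\Lambda_H)$ with $\|n_1^N(\ell)\|_\infty$ controlled independently of $\ell$ and $N$ via Assumption \ref{asp_n1} (the Fourier coefficients $\widehat n_j$ are summable uniformly in $N$, so $\sum_{\ell=1}^N\|n_1^N(\ell)\|_\infty$ can be bounded by a multiple of $\|n_1\|_\infty$ — indeed, reassembling, $b(w,\phi)$ is exactly $k^2\int_{D_H} n_1^N\,(\J_{D_H}^{-1}w)\overline{(\J_{D_H}^{-1}\phi)}\,\mathrm{d}x$, whose norm is $\le k^2\|n_1^N\|_\infty\|\J_{D_H}^{-1}w\|_{H^1_r}\|\J_{D_H}^{-1}\phi\|_{H^1_{-r}}$ up to the isomorphism constants of $\J_{D_H}$). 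Thus the operator $B:H_0^r\to(H_0^{-r})^*$ induced by $k^2\,b(\cdot,\cdot)$ satisfies $\|B\|\le C k^2\|n_1\|_\infty\le C k^2\|n\|_\infty$.

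The cleanest route from here is simply to invoke Lemma \ref{th:equivalent}: the variational problem \eqref{eq:var_Bloch} is equivalent to \eqref{eq:var_new}, which is uniquely solvable in $\widetilde H^1_r(D_H)$ by Theorem \ref{th:solv_new} under precisely the stated hypothesis; pulling the solution across the isomorphism $\J_{D_H}$ gives existence and uniqueness in $H_0^r(\Wast;\widetilde H^1_\alpha(D^\Lambda_H))$. If instead one prefers a self-contained Bloch-side argument, write the operator of \eqref{eq:var_Bloch} as $A-B$ and note $A$ is invertible with $\|A^{-1}\|$ bounded by the isomorphism constants times $\|\mathcal{B}_r(\widetilde n_N-1)^{-1}\|$; then one needs $\|A^{-1}\|\,\|B\|<1$, which is where the smallness of $k^2\|n\|_\infty$ is consumed a second time — so for the Bloch-only proof the clean hypothesis is the one guaranteeing $\mathcal{B}_r(n_N)$ invertible, which is exactly what Theorem \ref{th:solv_new} already packages. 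The main obstacle, and the only place genuine care is needed, is the bookkeeping on the coupling term: verifying that the decomposition \eqref{eq:decomp} does not introduce an $N$-dependent blow-up in the norm of $b$, i.e. that $\|n_1^N\|_\infty$ (equivalently $\sum_\ell\|n_1^N(\ell)\|_\infty$ with the unimodular factors) stays bounded by a constant times $\|n\|_\infty$ uniformly in $N$ — which follows from the cutoff construction of $n_1^N$ and Assumption \ref{asp_n1}, but should be stated explicitly. Everything else is the standard "invertible-plus-small-perturbation" Neumann-series argument via Lemma \ref{th:equivalent}.
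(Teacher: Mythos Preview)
Your proposal is correct, and its core argument---invoking Lemma~\ref{th:equivalent} to transport \eqref{eq:var_Bloch} to \eqref{eq:var_new} and then applying Theorem~\ref{th:solv_new} together with the isomorphism $\J_{D_H}$ to conclude existence and uniqueness---is exactly the route the paper takes. The surrounding operator-theoretic discussion and the alternative Neumann-series argument on the Bloch side are unnecessary for the proof (and absent from the paper), though harmless as commentary.
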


\begin{proof}

The first step is to prove the existence. From Theorem \ref{th:solv}, given any anti-linear functional $F$ defined  in  $H_0^{-r}(\Wast;H_\alpha^1(D^\Lambda_H))$, by \eqref{eq:vari-rhs}   for some $g\in H_r^{-1/2}(D_H)$, there is a unique solution $u_N\in \widetilde{H}^1_r(D_H)$ to the problem \eqref{eq:var_new}.  From Lemma \ref{th:equivalent}, $w=\J_{D_H} u_N\in H_0^r(\Wast;\widetilde{H}^1_\alpha(D^\Lambda_H))$ is a solution to the variational problem \eqref{eq:var_Bloch}. 

Then we prove the uniqueness of the solution. 
Suppose $w\in H_0^r(\Wast;\widetilde{H}^1_\alpha(D^\Lambda_H))$ is a solution to the problem \eqref{eq:var_Bloch} with  $F=0$. By choosing suitable test function $\phi$, it is easy to prove that $\J_{D_H}g=0$, thus from the property of the Bloch transform, $g=0$. Then $u_N:=\J_{D_H}^{-1}w$ is a solution to the problem \eqref{eq:var_new} with $g=0$.  From Theorem \ref{th:solv_new}, \eqref{eq:var_new} is uniquely solvable. Thus $u_N=0$, which implies that $w=0$.  The proof is finished.
\end{proof}

When $n$ and $g$ have higher regularities, the Bloch transformed field $w$ is smoother with respect to $\alpha$ (see \cite{Lechl2016b})   

\begin{theorem}\label{th:higher_reg}
Assume that $n\in C^{0,1}(D)$, i.e., it is Lipschitz continuous, $g\in H_r^{0}(D_H)$ for some $r\in[0,1)$. Then the solution $w\in H_0^r(\Wast;{H}^2_\alpha(D^\Lambda_H))$ and $u_N=\J_{D_H}^{-1} w\in {H}^2_r(D_H)$.
\end{theorem}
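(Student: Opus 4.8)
The plan is to establish elliptic regularity for the Bloch-transformed problem by exploiting the fact that, for each fixed $\alpha\in\Wast$, the variational equation \eqref{eq:var_Bloch} becomes (after inserting test functions supported near a single $\alpha$) an $\alpha$-quasi-periodic Helmholtz problem on the bounded cell $D^\Lambda_H$ with right-hand side in $H^0_\alpha$. First I would observe that since $n\in C^{0,1}(D)$, both $\widetilde{n}_N$ and $n_1^N$ are Lipschitz (hence bounded, and their Fourier coefficients $\widehat n_j$ now decay fast enough that $n_1^N(\ell)\in L^\infty$ with the sum \eqref{eq:decomp} still valid), and $\J_D g\in H^r_0(\Wast;H^0_\alpha(D^\Lambda_H))$ by the mapping properties of the Bloch transform recalled in the Appendix. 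The coupling term $b(w,\phi)=\sum_{\ell=1}^N\int_\Wast b_\ell(w,\phi)(\alpha)\d\alpha$ has the structure of a shift in $\alpha$ composed with multiplication by an $L^\infty$ function, so it maps $H^r_0(\Wast;H^1_\alpha)$ into the dual in a bounded way without costing regularity in the $x$-variable.

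Next, the key step: I would interpret \eqref{eq:var_Bloch} for fixed $\alpha$ as
\[
a_\alpha(w(\alpha,\cdot),\phi) = F_\alpha(\phi) + k^2\sum_{\ell=1}^N\int_{D^\Lambda_H} w(\alpha-2\ell\pi/(N\Lambda),x)\,e^{2\i\ell\pi x_1/(N\Lambda)}\,n_1^N(\ell)(x)\,\overline\phi(x)\d x
\]
for all $\phi\in H^1_\alpha(D^\Lambda_H)$, i.e. the weak form of an $\alpha$-quasi-periodic Helmholtz equation $\Delta w(\alpha,\cdot) + k^2\widetilde n_N w(\alpha,\cdot) = \text{(something in }H^0_\alpha)$ with the transparent DtN boundary condition $T^+_\alpha$ on $\Gamma^\Lambda_H$ and the homogeneous Dirichlet condition on $\Gamma^\Lambda_{h_0}$. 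Since $w(\alpha,\cdot)\in\widetilde H^1_\alpha(D^\Lambda_H)$ already, the right-hand side lies in $H^0_\alpha(D^\Lambda_H)$: $F_\alpha$ comes from $\J_D g(\alpha,\cdot)\in H^0_\alpha$, and the $b_\ell$ terms are $L^\infty$ multiples of $H^1_\alpha$ functions, hence in $H^0_\alpha$. Interior $H^2$ regularity is standard; near $\Gamma^\Lambda_{h_0}$ one uses the smooth Dirichlet boundary and reflection/difference quotients; near $\Gamma^\Lambda_H$ one uses that $T^+_\alpha$ is the exact DtN for a half-plane problem whose solution is smooth above $\Gamma_H$, so $w(\alpha,\cdot)$ extends to an $H^2$ (indeed analytic in $x_2$) solution there — this is exactly the quasi-periodic regularity argument used in \cite{Lechl2016,Lechl2016b}. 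This gives $w(\alpha,\cdot)\in H^2_\alpha(D^\Lambda_H)$ for a.e.\ $\alpha$, with the a priori bound $\|w(\alpha,\cdot)\|_{H^2_\alpha}\le C(\|\J_D g(\alpha,\cdot)\|_{H^0_\alpha} + \|w(\alpha,\cdot)\|_{H^1_\alpha} + \sum_\ell\|w(\alpha-2\ell\pi/(N\Lambda),\cdot)\|_{H^1_\alpha})$, where $C$ is uniform in $\alpha$ since the constant in the quasi-periodic $H^2$-estimate depends continuously on $\alpha$ over the compact closure of $\Wast$ and the coercivity/invertibility is uniform under the smallness assumption $k^2\|n\|_\infty\le\|\mathcal B_r(0)^{-1}\|^{-1}$.

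Then I would integrate this pointwise bound against the weight $(1+|\ell|^2)^r$ in the $\alpha$-frequency variable (equivalently, take the $H^r_0(\Wast;\cdot)$ norm). The term $\|\J_D g\|^2_{H^r_0(\Wast;H^0_\alpha)}=C\|g\|^2_{H^r_0(D_H)}<\infty$ is finite by hypothesis; the term $\|w\|^2_{H^r_0(\Wast;H^1_\alpha)}$ is finite because $w\in H^r_0(\Wast;\widetilde H^1_\alpha(D^\Lambda_H))$ by the previous theorem; and each shifted term $\|w(\cdot-2\ell\pi/(N\Lambda),\cdot)\|^2_{H^r_0(\Wast;H^1_\alpha)}$ equals, up to the equivalence of the periodic weighted norm under a frequency shift by a rational multiple of $\Lambda^*/N$, a finite multiple of $\|w\|^2_{H^r_0(\Wast;H^1_\alpha)}$ (one must be slightly careful: the shift $2\ell\pi/(N\Lambda)$ is not an integer multiple of $\Lambda^*=2\pi/\Lambda$, so it genuinely mixes Fourier modes across cells, but since there are only finitely many values $\ell=1,\dots,N$ and the shift is by a fixed amount, the weight $(1+|\ell|^2)^r$ changes only by a bounded factor, so the sum stays finite). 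Hence $w\in H^r_0(\Wast;H^2_\alpha(D^\Lambda_H))$, and applying the inverse Bloch transform (which is an isomorphism $H^r_0(\Wast;H^2_\alpha(D^\Lambda_H))\to H^2_r(D_H)$, again from the Appendix) gives $u_N=\J_{D_H}^{-1}w\in H^2_r(D_H)$.

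The main obstacle I anticipate is handling the coupling term $b$ correctly in the $\alpha$-variable: unlike the pure quasi-periodic case, \eqref{eq:var_Bloch} is nonlocal in $\alpha$, so the "fixed-$\alpha$" reduction must be justified by testing with $\phi$ concentrated near a point $\alpha$, and one must check that the $\alpha$-shifts by $2\ell\pi/(N\Lambda)$ do not destroy the integrability of the weighted norm — this is where keeping $N$ finite (rather than passing $N\to\infty$) is essential, and where the precise bookkeeping of the weight $(1+|\ell|^2)^r$ under non-integer frequency shifts has to be done with care. The elliptic regularity per fixed $\alpha$ and the boundary analysis at $\Gamma^\Lambda_H$ via $T^+_\alpha$ are by now routine and can be cited from \cite{Lechl2016,Lechl2016b}.
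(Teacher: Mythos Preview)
Your approach is correct and takes a genuinely different route from the paper's. The paper does \emph{not} work in the Bloch domain for this regularity result: it proves $u_N\in H^2_r(D_H)$ directly in the physical strip by a localization argument, and only at the very end applies the Bloch transform to transfer this to $w\in H_0^r(\Wast;H^2_\alpha(D^\Lambda_H))$. Concretely, the paper first cites Lemma~3.1(a) of \cite{Lechl2009} to obtain the unweighted local bound $\|u_N\|_{H^2(Q_j)}\le C(\|u_N\|_{H^1(Q_j)}+\|g\|_{L^2(Q_j)})$ on translated cubes $Q_j=(2j,0)^\top+(-2,2)\times(h_0,H)$ with $C$ independent of $j$, then performs an explicit product-rule expansion of $\partial^2\big[(1+|x|^2)^{r/2}u_N\big]$, estimates each term, uses that $(1+|2j|^2)^r(1+(|2j|-2)^2)^{-r}$ is uniformly bounded in $j$, and sums over $j$. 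Your route---fixed-$\alpha$ quasi-periodic cell regularity followed by weighted integration in $\alpha$---is more in keeping with the Bloch framework of the surrounding sections and avoids the somewhat tedious weight computations; the paper's route, by contrast, sidesteps the nonlocal $\alpha$-coupling entirely and needs no justification of a pointwise-in-$\alpha$ reduction (which, for $r\le 1/2$, is not immediately available via the delta-function test of the next theorem and would require a Lebesgue-point argument after conjugating to periodic test functions). One small remark: your worry that the shifts by $2\ell\pi/(N\Lambda)$ might spoil the $H_0^r(\Wast;\cdot)$ norm is unfounded---an $\alpha$-translation multiplies each Fourier coefficient $\hat w_j$ by the unimodular factor $e^{-\i\beta\Lambda j}$ (cf.\ Remark~\ref{rem:Four}), so that norm is exactly preserved.
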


\begin{proof}
When $n\in C^{0,1}(D_H)$, from the definition of $n_N$, $n\in C^{0,1}(D_H)$. 
From Lemma 3.1 (a) in \cite{Lechl2009}, when $g\in L^2(D_H)\subset  H_r^{0}(D_H)$, the solution for the variational problem \eqref{eq:var_new} $u_N\in H^2(D_H)$. Then we prove that for $r\in(0,1)$, $u_N\in H^2_r(D_H)$.

Let the open cube $Q_0:=(-2,2)\times(h_0,H)$, and the translation $Q_j:=(2j,0)^\top+Q_0$, then $D_H\subset \cup_{j\in\Z}Q_j$. From \cite{Lechl2009},  there is a constant $C$ independent of $j$ such that
\begin{equation*}
 \|u_N\|_{H^2(Q_j)}\leq C\left(\|u_N\|_{H^1(Q_j)}+\|g\|_{L^2(Q_j)}\right).
\end{equation*}
To prove that $u_N\in H_r^2(D_H)$, we have to consider $H^2$-norm of the function $(1+|x|^2)^{r/2}u_N(x)$. In fact, we only need to estimate the second order partial derivatives of this function. For example, consider
\begin{equation*}
  \begin{aligned}\frac{\partial ^2 \left[(1+|x|^2)^{r/2}u_N(x)\right]}{\partial x_1^2}=\Big[r(r-2) x_1^2(1+|x|^2)^{r/2-2}+r(1+|x|^2)^{r/2-1}\Big]u_N(x)\\+2rx_1(1+|x|^2)^{r/2-1}\frac{\partial u_N(x)}{\partial x_1}+(1+|x|^2)^{r/2}\frac{\partial ^2 u_N(x)}{\partial x_1^2}.
    \end{aligned}
\end{equation*}
% then
% \begin{equation*}
% \begin{aligned}
%  \left\|\frac{\partial ^2 \left[(1+|x|^2)^{r/2}u_N(x)\right]}{\partial x_1^2}\right\|_{L^2(Q_j)}^2\leq 3\left\|\Big[r(r-2) x_1^2 (1+|x|^2)^{r/2-2}+r(1+|x|^2)^{r/2-1}\Big]u_N(x)\right\|^2_{L^2(Q_j)}\\
%  +3\left\|2rx_1(1+|x|^2)^{r/2-1}\frac{\partial u_N(x)}{\partial x_1}\right\|_{L^2(Q_j)}^2+3\left\|(1+|x|^2)^{r/2}\frac{\partial ^2 u_N(x)}{\partial x_1^2}\right\|_{L^2(Q_j)}^2
%  \end{aligned}.
% \end{equation*}
As $x_1^2(1+|x|^2)^{r/2-2}$, $(1+|x|^2)^{r/2-1}$, $x_1(1+|x|^2)^{r/2-1}$ decays when $|x_1|\rightarrow\infty$, 
\begin{eqnarray*}
&&\begin{aligned}  \left\|\Big[r(r-2) x_1^2 (1+|x|^2)^{r/2-2}+r(1+|x|^2)^{r/2-1}\Big]u_N(x)\right\|^2_{L^2(Q_j)}
\\ \leq C\|u_N\|_{L^2(Q_j)}^2\leq C\|(1+|x|^2)^{r/2}u_N\|_{H^1(Q_j)}^2;
     \end{aligned}
\\ 
&&\left\|2rx_1(1+|x|^2)^{r/2-1}\frac{\partial u_N(x)}{\partial x_1}\right\|_{L^2(Q_j)}^2
 \leq C\left\|\frac{\partial u_N(x)}{\partial x_1}\right\|_{L^2(Q_j)}^2\leq C\|(1+|x|^2)^{r/2}u_N\|_{H^1(Q_j)}^2.
\end{eqnarray*}
Use the fact that 
\[\frac{\partial (1+|x|^2)^{r/2}u_N(x)}{\partial x_j}=rx_j(1+|x|^2)^{r/2-1}u_N(x)+(1+|x|^2)^{r/2}\frac{\partial u_N(x)}{\partial x_j},\quad j=1,2,\]
we estimate the last term:
\begin{equation*}
\begin{aligned}
& \left\|(1+|x|^2)^{r/2}\frac{\partial ^2 u_N(x)}{\partial x_1^2}\right\|_{L^2(Q_j)}^2\leq \max_{x\in Q_j}\left[(1+|x|^2)^{r}\right]\left\|\frac{\partial ^2 u_N(x)}{\partial x_1^2}\right\|_{L^2(Q_j)}^2\\
 \leq& C(1+|2j|^2)^{r}\left(\|u_N\|_{H^1(Q_j)}^2+\|g\|^2_{L^2(Q_j)}\right)\\
 \leq &C(1+|2j|^2)^{r}\left(\left\|r|x|(1+|x|^2)^{-1}u_N(x)\right\|^2_{L^2(Q_j)}+\left\|(1+|x|^2)^{-r/2}\frac{\partial (1+|x|^2)^{r/2}u_N(x)}{\partial x_1}\right\|_{L^2(Q_j)}^2\right.\\
 &\qquad\Big.+\left\|(1+|x|^2)^{-r/2}\frac{\partial (1+|x|^2)^{r/2}u_N(x)}{\partial x_2}\right\|_{L^2(Q_j)}^2 +\left\|(1+|x|^2)^{-r/2}(1+|x|^2)^{r/2}g(x)\right\|^2_{L^2_{Q_j}}\Big)\\
 \leq & C(1+|2j|^2)^r\max_{x\in Q_j}\left[(1+|x|^2)^{-r}\right]\left(\left\|(1+|x|^2)^{r/2}u_N(x)\right\|^2_{H^1(Q_j)}+\left\|(1+|x|^2)^{r/2}g(x)\right\|^2_{L^2(Q_j)}\right)\\
 \leq & C(1+|2j|^2)^r(1+(|2j|-2)^2)^{-r} \left(\left\|(1+|x|^2)^{r/2}u_N(x)\right\|^2_{H^1(Q_j)}+\left\|(1+|x|^2)^{r/2}g(x)\right\|^2_{L^2(Q_j)}\right).
\end{aligned}
\end{equation*}
As when $|j|\rightarrow\infty$, $(1+|2j|^2)^r(1+(|2j|-2)^2)^{-r}$ is uniformly bounded,
\begin{equation*}
 \left\|(1+|x|^2)^{r/2}\frac{\partial ^2 u_N(x)}{\partial x_1^2}\right\|_{L^2(Q_j)}^2\leq C\left(\left\|(1+|x|^2)^{r/2}u_N(x)\right\|^2_{H^1(Q_j)}+\left\|(1+|x|^2)^{r/2}g(x)\right\|^2_{L^2(Q_j)}\right),
\end{equation*}
thus
\[\left\|\frac{\partial^2 (1+|x|^2)^{r/2}u_N(x)}{\partial x_1^2}\right\|_{L^2(Q_j)}^2\leq C\left(\left\|(1+|x|^2)^{r/2}u_N(x)\right\|^2_{H^1(Q_j)}+\left\|(1+|x|^2)^{r/2}g(x)\right\|^2_{L^2(Q_j)}\right).\]
Similarly, we can also get the similar estimations of the norm $\left\|\frac{\partial^2 (1+|x|^2)^{r/2}u_N(x)}{\partial x_2^2}\right\|_{L^2(Q_j)}$ and $\left\|\frac{\partial^2 (1+|x|^2)^{r/2}u_N(x)}{\partial x_1\partial x_2}\right\|_{L^2(Q_j)}$. Thus
\begin{equation*}
 \|(1+|x|^2)^{r/2}u_N(x)\|^2_{H^2(Q_j)}\leq C\left(\left\|(1+|x|^2)^{r/2}u_N(x)\right\|^2_{H^1(Q_j)}+\left\|(1+|x|^2)^{r/2}g(x)\right\|^2_{L^2(Q_j)}\right).
\end{equation*}

Use the fact that $D_H\subset \cup_{j\in\Z}Q_j$, we can easily obtain that
\begin{equation*}
 \|(1+|x|^2)^{r/2}u_N(x)\|^2_{H^2(D_H)}\leq C\left(\left\|(1+|x|^2)^{r/2}u_N(x)\right\|^2_{H^1(D_H)}+\left\|(1+|x|^2)^{r/2}g(x)\right\|^2_{L^2(D_H)}\right).
\end{equation*}

Thus $u_N\in H_r^2(D_H)$, then $w\in H_0^r(\Wast;H^2_\alpha(D_H^\Lambda))$. The proof is finished.

\end{proof}

When  $g$  decays faster at the infinity, the Bloch transformed field $w$ depends continuously on the quasi-periodicity parameter $\alpha$.

\begin{theorem}
If  $g\in H_r^{-1}(D_H)$ for some $r\in(1/2,1)$, then the solution $w\in H_0^r(\Wast;\widetilde{H}^1_\alpha(D^\Lambda_H))$ equivalently \high{satisfies that for all $\alpha\in\Wast$ and $\phi_\alpha\in\widetilde{H}_\alpha^1(D^\Lambda_H)$}
\begin{equation}\label{eq:var_Bloch_continuous}
a_\alpha(w(\alpha,\cdot),\phi_\alpha)-k^2\sum_{\ell=1}^N b_\ell(w(\alpha,\cdot),\phi_\alpha)=F_\alpha(\phi_\alpha).
\end{equation}

\end{theorem}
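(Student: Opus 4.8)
The plan is to show that the integrated variational identity \eqref{eq:var_Bloch} is equivalent, under the decay hypothesis $r\in(1/2,1)$, to the pointwise-in-$\alpha$ identity \eqref{eq:var_Bloch_continuous}. The forward direction (pointwise $\Rightarrow$ integrated) is immediate: if \eqref{eq:var_Bloch_continuous} holds for every $\alpha\in\Wast$ and every $\phi_\alpha\in\widetilde{H}^1_\alpha(D^\Lambda_H)$, then given any $\phi\in H_0^{-r}(\Wast;\widetilde{H}^1_\alpha(D^\Lambda_H))$ we insert $\phi_\alpha=\phi(\alpha,\cdot)$, note $b(w,\phi)=\sum_{\ell=1}^N\int_\Wast b_\ell(w(\alpha-2\ell\pi/(N\Lambda),\cdot),\phi(\alpha,\cdot))\d\alpha$ — here one uses the $\Lambda^\ast$-periodicity of $w$ in $\alpha$ to rewrite the shifted argument — and integrate over $\Wast$ to recover \eqref{eq:var_Bloch}. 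So the content is the converse.

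For the converse, the key observation is a regularity/embedding fact: since $r>1/2$, the space $H_0^r(\Wast;\widetilde{H}^1_\alpha(D^\Lambda_H))$ embeds continuously into $C(\overline{\Wast};\widetilde{H}^1_\alpha(D^\Lambda_H))$ (this is the analogue, on the interval $\Wast$ with the quasi-periodic fiber, of the Sobolev embedding $H^r\hookrightarrow C^0$ in one variable for $r>1/2$; it is exactly the kind of statement recorded in \cite{Lechl2016b} and used in Theorem \ref{th:higher_reg}). Hence $w(\alpha,\cdot)$ is a well-defined element of $\widetilde{H}^1_\alpha(D^\Lambda_H)$ for \emph{every} $\alpha$, the map $\alpha\mapsto w(\alpha,\cdot)$ is continuous, and likewise all the terms $\alpha\mapsto a_\alpha(w(\alpha,\cdot),\phi_\alpha)$, $\alpha\mapsto b_\ell(w(\alpha,\cdot),\phi_\alpha)$ and $\alpha\mapsto F_\alpha(\phi_\alpha)$ are continuous in $\alpha$ (using boundedness of $a_\alpha$, of the $T^+_\alpha$, and of the multiplication operators defining $b_\ell$, all uniformly in $\alpha$, together with continuity of the DtN symbol $\sqrt{k^2-|\Lambda^\ast j-\alpha|^2}$ in $\alpha$).

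Now I would argue by contradiction, or equivalently by a localization/density argument. Fix $\alpha_0\in\Wast$ and $\phi_{\alpha_0}\in\widetilde{H}^1_{\alpha_0}(D^\Lambda_H)$; I want to test \eqref{eq:var_Bloch} against a family $\phi$ concentrating at $\alpha_0$. Concretely, take $\phi(\alpha,\cdot)=\chi_\epsilon(\alpha)\,\phi_{\alpha_0}(\cdot)e^{\i(\alpha_0-\alpha)x_1}$ where $\chi_\epsilon$ is a smooth bump of mass $1$ supported in an $\epsilon$-neighborhood of $\alpha_0$, so that $\phi(\alpha,\cdot)$ is genuinely $\alpha$-quasi-periodic and lies in $H_0^{-r}(\Wast;\widetilde{H}^1_\alpha(D^\Lambda_H))$; inserting this into \eqref{eq:var_Bloch}, dividing by $\int\chi_\epsilon$, and letting $\epsilon\to0$, the continuity of all integrands in $\alpha$ together with the continuity of $\alpha\mapsto w(\alpha,\cdot)$ forces the integral averages to converge to the value of the integrand at $\alpha_0$, yielding exactly \eqref{eq:var_Bloch_continuous} at $\alpha_0$. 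Since $\alpha_0$ and $\phi_{\alpha_0}$ were arbitrary, we are done.

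The main obstacle is the bookkeeping around the nonlocal term $b$: the coupling term $b_\ell$ evaluates $w$ at the shifted point $\alpha-2\ell\pi/(N\Lambda)$, so when one concentrates the test function at $\alpha_0$ one must be careful that it is $w(\alpha_0-2\ell\pi/(N\Lambda),\cdot)$ — and not $w(\alpha_0,\cdot)$ — that appears, matching the form of \eqref{eq:var_Bloch_continuous}; this is where the $\Lambda^\ast$-periodicity of $w$ and the continuous embedding (so that pointwise evaluation of the \emph{shifted} $w$ makes sense) are both essential. The other point requiring a little care is that the approximating test functions must stay in the correct quasi-periodic fiber space as $\alpha$ varies, which is why the factor $e^{\i(\alpha_0-\alpha)x_1}$ is included; everything else is a routine continuity-of-parameter-dependent-sesquilinear-forms estimate.
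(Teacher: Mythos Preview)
Your argument is correct, but the route you take for the substantive direction (integrated $\Rightarrow$ pointwise) differs from the paper's. The paper exploits the dual formulation of the very fact you use: instead of invoking the Sobolev embedding $H^r(\Wast)\hookrightarrow C^0(\Wast)$ for $r>1/2$ and then passing to the limit through approximate identities $\chi_\epsilon$, the paper simply notes that $\delta_{\alpha_0}\in H^{-r}(\Wast)$ for $r>1/2$, so that $\phi(t,x):=\delta_{\alpha_0}(t)\,\phi_{\alpha_0}(x)$ is already an admissible test function in $H_0^{-r}(\Wast;\widetilde{H}^1_\alpha(D^\Lambda_H))$; plugging this $\phi$ into \eqref{eq:var_Bloch} yields \eqref{eq:var_Bloch_continuous} in one line, with the shift in the $b_\ell$ term dropping out automatically. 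Your approximate-identity argument is a perfectly valid unpacking of the same idea, and has the merit of making the continuity in $\alpha$ and the quasi-periodicity bookkeeping (your factor $e^{\i(\alpha_0-\alpha)x_1}$) explicit, but the paper's version is considerably shorter.

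One small gap in your ``easy'' direction: for a general test function $\phi\in H_0^{-r}(\Wast;\widetilde{H}^1_\alpha(D^\Lambda_H))$ with $-r<-1/2$, the pointwise evaluation $\phi(\alpha,\cdot)$ need not make sense, so you cannot literally ``insert $\phi_\alpha=\phi(\alpha,\cdot)$ and integrate''. The paper handles this by testing against an orthogonal family in $L^2(\Wast;\widetilde{H}^1_\alpha(D^\Lambda_H))$ (equivalently, a dense set of $\phi$ that \emph{are} smooth in $\alpha$) and then extending by continuity of the sesquilinear forms; you should add the same density step.
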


\begin{proof}
Let $\phi(t,x):=\delta_{\alpha}(t)\phi_{\alpha}(x)$, where $\delta_{\alpha}(t)$ is the Dirac Delta distribution at any fixed $\alpha\in\Wast$ and $\phi_\alpha\in\widetilde{H}_\alpha^1(D^\Lambda_H)$. As $\delta_\alpha\in H^{-r}(\Wast)$ for any $r>1/2$, $\phi\in H_0^{-r}(\Wast;\widetilde{H}_\alpha^1(D^\Lambda_H))$. Pluge the test function $\phi$ into \eqref{eq:var_Bloch}, we arrive at \eqref{eq:var_Bloch_continuous} immediately. On the other hand, if \eqref{eq:var_Bloch_continuous} holds, we can construct an orthogonal family of test functions in $L^2(\Wast;\widetilde{H}^1_\alpha(D^\Lambda_H))$ to prove that $w$ satisfies \eqref{eq:var_Bloch}. The proof is finished.

\end{proof}

\begin{remark}
In this paper, \high{ the period of } the Floquet-Bloch transform is chosen as the period of $n_2$. In fact, we can also choose the period of $n_1$ and all of the arguments are similar. 

In fact, the choice of the period is kind of arbitrary -- we can even choose the parameter which is neither the period of $n_1$ nor that of $n_2$, then the problem is treated generaly as the scattering problem with a rough layer. In this case, it is possible that the computational complexity is increased in numerical implementation.
\end{remark}

\section{Discretization  of the variational problem}

In this section, we consider the discretization with respect to $\alpha$ based on the variational formulation \eqref{eq:var_Bloch}.
\high{Define} the uniformly distributed grid points
\begin{equation*}
\alpha_N^{(j)}=\frac{2\pi j}{N\Lambda}\in\Wast,\,j=1,\dots,N,
\end{equation*}
then
\begin{equation*}
 b_j(w,z)(\alpha)=\int_{D_H^\Lambda}w\left(\alpha-\alpha_N^{(j)},x\right)e^{\i\alpha_N^{(j)}x_1}n_1^N(j)(x)\overline{z}(\alpha,x)\d x.
\end{equation*}

Let the interval be defined as $I_j=\left(\alpha_N^{(j)}-\frac{2\pi}{N\Lambda},\alpha_N^{(j)}\right]$, and $\psi_N^{(j)}(\alpha)$ be the indicator function for the interval $I_j$, i.e., $\psi_N^{(j)}(\alpha)$ takes the value $1$ in the interval $I_j$ while $0$ otherwise. Let $w$ be approximated by
\begin{equation*}
 w_N(\alpha,x):=\sum_{m=1}^N \psi_N^{(m)}(\alpha)w_N^{(m)}(x),
\end{equation*}
where $w_N^{(m)}(x)=w\left(\alpha_N^{(m)},x\right)$.
 Let $z_{j}(\alpha,x)=\psi_N^{(j)}(\alpha)z_N^{(j)}(x)$ where $z_j\in \widetilde{H}^1_{\alpha_N^{(j)}}(D^\Lambda_H)$. Pluge these two functions into the variational form \eqref{eq:var_Bloch}, then the first term becomes
\begin{equation*}
 \int_\Wast a_\alpha(w_N(\alpha,\cdot),z_{j}(\alpha,\cdot))\d\alpha=\frac{2\pi}{N\Lambda}a_N^{(j)}\left(w_N^{(j)},z_N^{(j)}\right),
\end{equation*}
where $a_N^{(j)}(\cdot,\cdot)=a_{\alpha_N^{(j)}}(\cdot,\cdot)$. 

Then we consider the second term $b(w_N,z_j)$. With the representation of $w_N(\alpha,x)$,
\begin{equation*}
\begin{aligned}
 w_N\left(\alpha-\alpha_N^{(j)},x\right)&=\sum_{m=1}^N \psi_N^{(m)}\left(\alpha-\alpha_N^{(j)}\right)w_N^{(m)}(x)\\
 &=\sum_{m=1}^N \psi_N^{(m)}\left(\alpha\right)w_N^{(m-j)}(x)
 \end{aligned}\, ,\quad\text{ where }
 w_N^{(m-j)}=\begin{cases}
              w_N^{(m-j)},\quad m>j;\\
              w_N^{m-j+N},\quad m\leq j.
             \end{cases}
\end{equation*}
Note that the new definition of $w_N^j$ (when $j\leq0$) comes from the $\Lambda^*$-periodicity of $w(\alpha,\cdot)$. Then
\begin{equation*}
\begin{aligned}
&\int_\Wast b_{m}(w,z_j)(\alpha)\d\alpha\\=&\int_\Wast\int_{D_H^\Lambda}\left[\sum_{j'=1}^N \psi_N^{(j')}\left(\alpha\right)w_N^{(j'-m)}(x)\right]e^{\i\alpha_N^{(m)}x_1}n_1^N(m)(x)\psi_N^{(j)}(\alpha)\overline{z}_N^{(j)}(x)\d x\d\alpha\\
=&\frac{2\pi}{N\Lambda}\int_{D^\Lambda_H}w_N^{(j-m)}(x)e^{\i\alpha_N^{(m)}x_1}n_1^N(m)(x)\overline{z}_N^{(j)}(x)\d x.
\end{aligned}
\end{equation*}
We also approximate the right hand side in the similar way, then
\begin{equation*}
\begin{aligned}
 \int_\Wast F_\alpha^N(z_j(\alpha,\cdot))\d\alpha&=\frac{2\pi}{N\Lambda}\left[-\int_{D_H^\Lambda}[\J_{D_H}g]\left(\alpha_N^{(j)},x\right)\overline{z}_N^{(j)}(x)\d x\right]\\
 &:=\frac{2\pi}{N\Lambda} g_j\left(z_N^{(j)}\right).
 \end{aligned}
\end{equation*}

Then we arrive at the discretized form of \eqref{eq:var_Bloch} for any fixed $j$:
\begin{equation}\label{eq:var_disc_a}
 a_N^{(j)}\left(w_N^{(j)},z_N^{(j)}\right)-k^2\sum_{m=1}^N \int_{D^\Lambda_H}e^{\i\alpha_N^{(m)}x_1}n_1^N(m)(x)w_N^{j-m}(x)\overline{z}_N^{(j)}(x)\d x=g_j\left(z_N^{(j)}\right).
\end{equation}

It is more convenient to consider functions that are periodic in $x_1$, so we define
\begin{equation*}
 \widetilde{w}_N^{(j)}(x)=e^{\i\alpha_N^{(j)} x_1}w_N^{(j)}(x),\quad \widetilde{z}_N^{(j)}(x)=e^{\i\alpha_N^{(j)} x_1}z_N^{(j)}(x),
\end{equation*}
then $\widetilde{w}_N^{(j)},\widetilde{z}_N^{(j)}\in \widetilde{H}^1_0(D_H^\Lambda)$ for any $j=1,2,\dots,N$. 
Replace $w_N^{(j)}$ and $z_N^{(j)}$ by $\widetilde{w}_N^{(j)}$ and $\widetilde{z}_N^{(j)}$ in \eqref{eq:var_disc_a}, then
\begin{equation}
 \widetilde{a}_N^{(j)}\left(\widetilde{w}_N^{(j)},\widetilde{z}_N^{(j)}\right)-k^2\sum_{m=1}^N \int_{D^\Lambda_H}\widetilde{w}_N^{(j-m)}(x)n_1^N(m)(x)\iota^{(j-m)}(x)\overline{\widetilde{z}}_N^{(j)}(x)\d x=\widetilde{g}_j\left(\widetilde{z}_N^{(j)}\right),
\end{equation}
where
\begin{eqnarray*}
 && \begin{aligned}
 \widetilde{a}_N^{(j)}\left( \widetilde{w}_N^{(j)}, \widetilde{z}_N^{(j)}\right)=&\int_{D_H^\Lambda}\left[\left(\nabla+\i\alpha_N^{(j)}{\bm e}_1\right) \widetilde{w}_N^{(j)}\cdot\left(\nabla-\i\alpha_N^{(j)}{\bm e}_1\right)\overline{\widetilde{z}}_N^{(j)}\right.\\&\left.-k^2\widetilde{n}_N  \widetilde{w}_N^{(j)}\overline{\widetilde{z}}_N^{(j)}\right]\d x-\int_{\Gamma^\Lambda_H}\widetilde{T}^+_\alpha\left( \widetilde{w}_N^{(j)}\right)\overline{\widetilde{z}}_N^{(j)}\d s,
    \end{aligned}
\\
 && \iota^{(j-m)}(x)=\begin{cases}
                          1,\quad j-m>0;\\
                          \exp(2\i\pi x_1/\Lambda),\quad j-m\leq 0;
                         \end{cases}
\\
 && \widetilde{g}_j\left(\widetilde{z}_N^{(j)}\right)=-\int_{D_H^\Lambda}[\J_{D_H}g]\left(\alpha_N^{(j)},x\right)e^{\i\alpha_N^{(j)}x_1}\overline{\widetilde{z}}_N^{(j)}(x)\d x.
\end{eqnarray*}
$\widetilde{T}_\alpha^+$ is the periodic Dirichlet-to-Neumann map defined by
\begin{equation}
 \widetilde{T}_\alpha^+\psi=\i\sum_{\ell\in\Z}\sqrt{k^2-|\Lambda^*\ell-\alpha|^2}\widehat{\psi}e^{\i\Lambda^* \ell x_1},\quad \psi=\sum_{\ell\in\Z}\widehat{\psi}e^{\i\Lambda^* \ell x_1}.
\end{equation}

Let the inverse Bloch transform of $w_N$ be denoted by $\widetilde{u}_N$, then
\begin{equation}
\left(\J_{D_H}^{-1} w_N\right)(x) =\sqrt{\frac{\Lambda}{2\pi}}\int_\Wast w_N(\alpha,x)\d\alpha=\frac{1}{N}\sqrt{\frac{2\pi}{\Lambda}}\sum_{j=1}^N e^{\i\alpha_N^{(j)}x_1}w_N^{(j)}(x),\quad x\in D^\Lambda_H.
\end{equation}

Now we have obtained the discretization of the variational problem \eqref{eq:var_new} with respect to $\alpha$, i.e., \eqref{eq:var_disc_a}. Then we  continue with the numerical scheme, i.e., to apply the finite element method for discretization with respect to the parameter $x$ in the next section.

\section{The finite element method}

In this section, we discuss a \high{Galekin} discretization of the variational formulation \eqref{eq:var_Bloch} and the finite element  method  is applied to numerical solutions. As was shown in the last section, the field $w(\alpha,\cdot)$ has been approximated by the piesewise constant function $w_N$ with respect to $\alpha$, and the discretization has been established in \eqref{eq:var_disc_a}. Thus we only need to continue with the discretization with respect to $x$.

Assume that $\mathcal{M}_h$ is a family of regular and quasi-uniform meshes (see \cite{Brenn1994}) for the periodic cell $D^\Lambda_H$,  where $0<h\leq h_0$ and $h_0$ is a small enough positive number. To obtain periodic basic functions, it is required that the nodal points on the left and right boundaries have the same heights. By omitting the nodal points on the left boundary, let $\left\{\phi_M^{(\ell)}\right\}_{\ell=1}^M$ be the family of piecewise linear and globally continuous nodal functions. For any $\phi_M^{(\ell)}$, it equals to one at the $\ell$-th point (except for the lower boundary) and zero at other nodal points. Then ${V}_h:={\rm span}\left\{\phi_M^{(\ell)}\right\}_{\ell=1}^M$ is a subspace of $\widetilde{H}_0^1(D^\Lambda_H)$. Then we define the finite element space ${X}_{N,h}$ by
\begin{equation*}
{X}_{N,h}:=\left\{v_{N,h}(\alpha,x)=\exp\left(-\i\alpha_N^{(j)} x_1\right)\sum_{j=1}^N\sum_{\ell=1}^M v_{N,h}^{(j,\ell)}\psi_N^{(j)}(\alpha)\phi_M^{(\ell)}(x):\, v_{N,h}^{(j,\ell)}\in\C\right\}.
\end{equation*}
It is easy to check that ${X}_{N,h}\subset L^2(\Wast;\widetilde{H}^1_\alpha(D^\Lambda_H))$ following \cite{Lechl2017}.  Then we  seek for a finite element solution $w_{N,h}\in {X}_{N,h}$ to the finite-dimensional (with respect to $\alpha$) problem \eqref{eq:var_disc_a} for any $v_{N,h}\in {X}_{N,h}$. Let 
\begin{equation}
 \widetilde{w}_N^{(j)}(x)=\sum_{\ell=1}^M w^{(j,\ell)}_{N,h}\phi_M^{(\ell)}(x),
\end{equation}
then
\begin{equation*}
w_{N,h}=e^{-\i\alpha_N^{(j)} x_1}\sum_{\ell=1}^N\sum_{j=1}^M w_{N,h}^{(\ell,j)}\psi_N^{(\ell)}(\alpha)\phi_M^{(j)}(x).
\end{equation*}
Let the test function $z_{j',\ell'}=e^{-\i\alpha_N^{(j')} x_1}\psi_N^{(j')}(\alpha)\phi_M^{(\ell')}(x)$ with $\widetilde{z}_N^{(j')}(x)=\phi_M^{(\ell')}(x)$. Then \eqref{eq:var_disc_a} has the discretized form
\begin{eqnarray}\label{eq:var_disc1}
&& \sum_{\ell=1}^M \delta_{j,j'}A_{\ell,\ell'}^{j}w_{N,h}^{(\ell,j)}-k^2\sum_{m=1}^N \sum_{\ell=1}^M B_{\ell,\ell'}^{m,+}w_{N,h}^{(\ell,j'-m)}=g^{j'}_{\ell'}\quad\text{ when } j>m;\\
\label{eq:var_disc2}
 && \sum_{\ell=1}^M \delta_{j,j'}A_{\ell,\ell'}^{j,j'}w_{N,h}^{(\ell,j)}-k^2\sum_{m=1}^N \sum_{\ell=1}^M B_{\ell,\ell'}^{m,-}w_{N,h}^{(\ell,j'-m+N)}=g^{j'}_{\ell'}\quad\text{ when }j\leq m;
\end{eqnarray}
where $\delta(j,j')=1$ if and only if $j=j'$, otherwise it equals to $0$. 
The coefficients are defined as follows:
\begin{eqnarray*}
&& A_{\ell,\ell'}^{j}=\widetilde{a}_N^{(j)}\left(\phi_M^{(\ell)},\phi_M^{(\ell')}\right);\\
&& B_{\ell,\ell'}^{m,+}=\int_{D_H^\Lambda}n_1^N(m)(x)\phi_M^{(\ell)}(x)\overline{\phi_M^{(\ell')}}(x)\d x\quad\text{ when }j>0;\\
&& B_{\ell,\ell'}^{m,-}=\int_{D_H^\Lambda}n_1^N(m)(x)e^{2\i\pi x_1/\Lambda}\phi_M^{(\ell)}(x)\overline{\phi_M^{(\ell')}}(x)\d x\quad\text{ when }j\leq 0;\\
&& g_{\ell'}^{j'}=\widetilde{g}_j'\left(\phi_M^{(\ell')}\right),
\end{eqnarray*}

Define the matrices and vectors as follows:
\begin{equation*}
\begin{aligned}
 &{\bm A}_j=\left(
 \begin{matrix}
  A_{1,1}^j & A_{1,2}^j & \cdots & A_{1,M}^j\\
  A_{2,1}^j & A_{2,2}^j & \cdots & A_{2,M}^j\\
  \vdots & \vdots & \ddots &\vdots\\
  A_{M,1}^j & A_{M,2}^j & \cdots & A_{M,M}^j
 \end{matrix}
 \right);\quad
  {\bm B}_m^+=\left(
 \begin{matrix}
  B_{1,1}^{m,+} & B_{1,2}^{m,+} & \cdots & B_{1,M}^{m,+}\\
  B_{2,1}^{m,+} & B_{2,2}^{m,+} & \cdots & B_{2,M}^{m,+}\\
  \vdots & \vdots & \ddots &\vdots\\
  B_{M,1}^{m,+} & B_{M,2}^{m,+} & \cdots & B_{M,M}^{m,+}
 \end{matrix}
 \right);\\
 &{\bm B}_m^-=\left(
 \begin{matrix}
  B_{1,1}^{m,-} & B_{1,2}^{m,-} & \cdots & B_{1,M}^{m,-}\\
  B_{2,1}^{m,-} & B_{2,2}^{m,-} & \cdots & B_{2,M}^{m,-}\\
  \vdots & \vdots & \ddots &\vdots\\
  B_{M,1}^{m,-} & B_{M,2}^{m,-} & \cdots & B_{M,M}^{m,-}
 \end{matrix}
 \right);\quad
 {\bm W}_j=\left( \begin{matrix}
  w_{N,h}^{1,j}\\
  w_{N,h}^{2,j}\\
  \vdots \\
  w_{N,h}^{M,j} 
 \end{matrix}
 \right);\quad
 {\bm G}_j=\left( \begin{matrix}
  g_{1}^{j}\\
  g_{2}^{j}\\
  \vdots \\
  g_{M}^{j} 
 \end{matrix}
 \right).
 \end{aligned}
\end{equation*}

Thus the discretization equation \eqref{eq:var_disc1}-\eqref{eq:var_disc2} has the form of
\begin{equation}\label{eq:dis_fem}
\Big[{\bm A}_M^N-k^2{\bm B}_M^N\Big]\left( \begin{matrix}
  {\bm W}_1\\
  {\bm W}_2\\
  \vdots \\
  {\bm W}_N 
 \end{matrix}
 \right)=\left( \begin{matrix}
  {\bm G}_1\\
  {\bm G}_2\\
  \vdots \\
  {\bm G}_N 
 \end{matrix}
 \right)
\end{equation}
where
\begin{equation*}
\begin{aligned}
& {\bm A}_M^N=\begin{pmatrix}
{\bm A}_1 & 0 &  \cdots & \cdots  & \cdots & 0\\
0 & {\bm A}_2 & 0 & & &  \vdots\\
\vdots & 0 & {\bm A}_3 & 0 &  &  \vdots\\
\vdots &  & \ddots &  \ddots & \ddots  & \vdots\\
\vdots &   & & 0 & {\bm A}_{N-1} & 0 \\
0 &   \cdots & \cdots  &\cdots  & 0 & {\bm A}_N\\
\end{pmatrix};\\
 &{\bm B}_M^N=
\begin{pmatrix}
{\bm B}_N^- & {\bm B}_{N-1}^- & {\bm B}_{N-2}^- & \cdots & \cdots & \cdots & {\bm B}_2^- & {\bm B}_1^-\\
{\bm B}_1^+ & {\bm B}_N^- & {\bm B}_{N-1}^- & {\bm B}_{N-2}^- & & & & {\bm B}_2^-\\
{\bm B}_2^+ & {\bm B}_1^+ & {\bm B}_N^- & {\bm B}_{N-1}^- & {\bm B}_{N-2}^- & & & \vdots\\
\vdots & \ddots & \ddots & \ddots & \ddots & \ddots & & \vdots\\
\vdots & & \ddots & \ddots & \ddots & \ddots & \ddots & \vdots\\
\vdots & & & {\bm B}_2^+ & {\bm B}_1^+ & {\bm B}_N^- & {\bm B}_{N-1}^- & {\bm B}_{N-2}^-\\
{\bm B}_{N-2}^+ & & & & {\bm B}_2^+ & {\bm B}_1^+ & {\bm B}_N^- & {\bm B}_{N-1}^-\\
{\bm B}_{N-1}^+ & {\bm B}_{N-2}^+ & \cdots  & \cdots & \cdots & {\bm B}_2^+ & {\bm B}_1^+ & {\bm B}_N^-\\
\end{pmatrix}.
\end{aligned}
\end{equation*}

At the end of this section, we consider the error estimate of the finite element method. Before that, we recall the Minkowski integral inequality, see Theorem 202 in \cite{Hardy1988}.

\begin{lemma}
 Suppose $(S_1,\mu_1)$ and $(S_2,\mu_2)$ are two measure spaces and $F:\,S_1\times S_2\rightarrow\R$ is measurable. Then the following inequality holds for any $p\geq 1$
 \begin{equation}\label{eq:minkowski}
  \left[\int_{S_2}\left|\int_{S_1}F(y,z)\d\mu_1(y)\right|^p\d\mu_2(z)\right]^{1/p}\leq \int_{S_1}\left(\int_{S_2}|F(y,z)|^p\d\mu_2(z)\right)^{1/p}\d\mu_1(y).
 \end{equation}

\end{lemma}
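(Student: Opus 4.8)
The plan is to establish the Minkowski integral inequality~\eqref{eq:minkowski} by the standard combination of H\"older's inequality with the Fubini--Tonelli theorem. First I would reduce to the case of a nonnegative kernel: since $\left|\int_{S_1}F(y,z)\d{\mu_1(y)}\right|\le\int_{S_1}|F(y,z)|\d{\mu_1(y)}$ for $\mu_2$-almost every $z$, replacing $F$ by $|F|$ only enlarges the left-hand side of~\eqref{eq:minkowski} and leaves the right-hand side unchanged, so it suffices to treat $F\ge 0$. I would also assume, harmlessly for every measure space occurring in this paper, that $(S_1,\mu_1)$ and $(S_2,\mu_2)$ are $\sigma$-finite, so that Tonelli's theorem applies. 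The case $p=1$ is then immediate, since both sides of~\eqref{eq:minkowski} equal $\int_{S_1}\int_{S_2}F(y,z)\d{\mu_2(z)}\d{\mu_1(y)}$.

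For $p>1$, set $G(z):=\int_{S_1}F(y,z)\d{\mu_1(y)}$, let $q:=p/(p-1)$ be the conjugate exponent, and write $R$ for the right-hand side of~\eqref{eq:minkowski}. The core computation expands $\int_{S_2}G(z)^p\d{\mu_2(z)}=\int_{S_2}G(z)^{p-1}\bigl(\int_{S_1}F(y,z)\d{\mu_1(y)}\bigr)\d{\mu_2(z)}$, interchanges the order of integration by Tonelli, and applies H\"older's inequality in the variable $z$ to the inner integral with exponents $q$ and $p$; because $(p-1)q=p$, this yields $\|G\|_{L^p(S_2)}^p\le\|G\|_{L^p(S_2)}^{p-1}\,R$. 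Dividing through by $\|G\|_{L^p(S_2)}^{p-1}$ then produces~\eqref{eq:minkowski}.

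The one point that genuinely needs care — and the main obstacle — is that the division above is only legitimate once $\|G\|_{L^p(S_2)}$ is known a priori to be finite and nonzero. I would remove this defect by a truncation argument: pick an exhausting sequence $S_2^{(m)}\uparrow S_2$ with $\mu_2(S_2^{(m)})<\infty$, set $G_m:=\min(G,m)\,\mathbf{1}_{S_2^{(m)}}$ so that $\|G_m\|_{L^p(S_2)}<\infty$ automatically, run the computation of the previous paragraph with $G_m^{p-1}$ in place of $G^{p-1}$ on the left (using $G_m\le G$ to keep the inequality oriented correctly), divide to get $\|G_m\|_{L^p(S_2)}\le R$, and finally let $m\to\infty$, invoking the monotone convergence theorem to recover $\|G\|_{L^p(S_2)}\le R$. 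An alternative that bypasses truncation is to use the duality formula $\|G\|_{L^p(S_2)}=\sup_{\|h\|_{L^q(S_2)}\le 1}\int_{S_2}Gh\d{\mu_2}$ together with Fubini and H\"older; I would note this route in passing but carry out the truncation version in full, as it does not rely on the characterization of the $L^p$ norm by duality.
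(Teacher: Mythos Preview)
Your argument is the standard and correct proof of the Minkowski integral inequality via H\"older and Tonelli, with the usual truncation to justify the division step. There is nothing to criticize mathematically.

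As for comparison with the paper: the paper does not prove this lemma at all. It merely states the inequality and cites Theorem~202 in \cite{Hardy1988} as its source, treating it as a known classical result to be invoked later in the proof of Theorem~\ref{th:error}. So you have supplied a full proof where the paper deliberately omits one. Your added $\sigma$-finiteness hypothesis is harmless here, since the only application in the paper is to $S_1=\Wast$ and $S_2=D_H^\Lambda$ with Lebesgue measure and $p=2$.
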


With Theorem \ref{th:solv_new} and \ref{th:err_approx}, following the proof of Theorem 9 in \cite{Lechl2017}, the convergence of the finite element method will be concluded in the following theorem.

\begin{theorem}\label{th:error}
Assume that $k^2\|n\|_\infty< \left\|\mathcal{B}_r(0)^{-1}\right\|^{-1}$,  $n\in C^{0,1}(D_H)$ and satisfies Assumption \ref{asp_n1} and $g\in H_r^{0}(D_H)$ for some $r\geq 1/2$. The linear system \eqref{eq:dis_fem} is uniquely solvable in ${X}_{N,h}$ for any $F$ defined by \eqref{eq:vari-rhs} as an anti-linear functional on  $H_0^{-r}(\Wast;H^{1}_\alpha(D^\Lambda_H))$ when $N\geq N_0$ is large enough and $0<h<h_0$ is small enough. The solution $w_{N,h}\in {X}_{N,h}$ satisfies the error estimate
\begin{equation}\label{eq:err_w}
\|w_{N,h}-w\|_{L^2(\Wast;H^\ell(D^\Lambda_H))}\leq Ch^{1-\ell}\left(N^{-r}+h\right)\|g\|_{H_r^{0}(\Gamma_H)},\quad\ell=0,1.
\end{equation}
Let $u_{N,h}:=\J^{-1}_{D_H}w_{N,h}$, then the error between $u_{N,h}$ and $u$ is bounded by
\begin{equation}\label{eq:err_u}
 \|u_{N,h}-u\|_{H^\ell(D_H^\Lambda)}\leq C \left[h^{1-\ell}(N^{-r}+h)+N^{-r}\right]\|g\|_{H_r^0(\D_H)}
\end{equation}

\end{theorem}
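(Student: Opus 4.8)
The plan is to follow the proof of Theorem~9 in \cite{Lechl2017}, the new point being the treatment of the nonlocal coupling term $b(\cdot,\cdot)$ and of its quadrature in $\alpha$. \textbf{Step 1 (error splitting).} For $\ell=0,1$ I would write
\begin{equation*}
\|u_{N,h}-u\|_{H^\ell(D_H^\Lambda)}\le\|u_{N,h}-u_N\|_{H^\ell(D_H^\Lambda)}+\|u_N-u\|_{H^\ell(D_H^\Lambda)},
\end{equation*}
and estimate the second term by Theorem~\ref{th:err_approx} together with \eqref{eq:stab}: since $g\in H_r^0(D_H)\subset H_r^{-1}(D_H)$, one gets $\|u_N-u\|_{H^\ell(D_H^\Lambda)}\le\|u_N-u\|_{H^1(D_H)}\le CN^{-r}\|g\|_{H_r^0(D_H)}$, the factor $(\Lambda/4)^{-r}$ being absorbed into $C$. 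For the first term I would use $u_{N,h}-u_N=\J_{D_H}^{-1}(w_{N,h}-w)$ with $w=\J_{D_H}u_N$, and the boundedness of $\J_{D_H}^{-1}\colon L^2(\Wast;H^\ell_\alpha(D_H^\Lambda))\to H^\ell(D_H)$ (see Appendix), so that \eqref{eq:err_u} reduces to \eqref{eq:err_w}.

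\textbf{Step 2 (discrete well-posedness).} I would read \eqref{eq:dis_fem} as $\mathbf A_M^N-k^2\mathbf B_M^N$, a perturbation of the block-diagonal $\mathbf A_M^N$ whose $j$-th block is the $P_1$ finite element discretisation of the $\alpha_N^{(j)}$-quasi-periodic form $a_\alpha$. Under the smallness assumption $k^2\|n\|_\infty<\|\mathcal B_r(0)^{-1}\|^{-1}$, each $a_\alpha$ is an $O(k^2\|n\|_\infty)$ perturbation of the homogeneous quasi-periodic form, hence, following \cite{Lechl2016}, the continuous $\alpha$-problems are uniformly well-posed for $\alpha\in\Wast$; since $a_\alpha$ obeys a G\aa rding inequality modulo an $x$-compact term, the finite element blocks are uniformly inf-sup stable once $0<h<h_0$ is small enough. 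The coupling block $\mathbf B_M^N$ is the finite element image of Bloch-multiplication by $n_1^N$; by Assumption~\ref{asp_n1} and the reconstruction $\sum_{\ell=1}^N\exp(2\i\ell\pi x_1/(N\Lambda))\,n_1^N(\ell)=n_1^N$, its operator norm on the discrete space is bounded by $\|n_1\|_\infty\le\|n\|_\infty$ uniformly in $N$ and $h$. Therefore $\mathbf A_M^N-k^2\mathbf B_M^N$ is boundedly invertible with bound independent of $N,h$, giving unique solvability of \eqref{eq:dis_fem} and $\|w_{N,h}\|_{L^2(\Wast;H^1(D_H^\Lambda))}\le C\|g\|_{H_r^0(D_H)}$.

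\textbf{Step 3 (quasi-optimality and rates).} With the discrete inf-sup condition, a Strang-type lemma gives
\begin{equation*}
\|w_{N,h}-w\|_{L^2(\Wast;H^1(D_H^\Lambda))}\le C\Big(\inf_{v\in X_{N,h}}\|w-v\|_{L^2(\Wast;H^1(D_H^\Lambda))}+E_N\Big),
\end{equation*}
where $E_N$ is the consistency error from replacing the $\alpha$-integrals (also inside $b$) by the rectangle rule on $\{\alpha_N^{(j)}\}$. By Theorem~\ref{th:higher_reg}, $w\in H_0^r(\Wast;H^2_\alpha(D_H^\Lambda))$ with $r\ge1/2$; hence the piecewise-constant-in-$\alpha$ interpolation onto $\{\alpha_N^{(j)}\}$ has error $O(N^{-r})$ in $L^2(\Wast;H^1_\alpha)$, and the $P_1$ interpolation in $x$ of each periodised component $\widetilde w_N^{(j)}\in H^2(D_H^\Lambda)$ adds $O(h)$, so the best-approximation term is $O((N^{-r}+h)\|g\|_{H_r^0})$; the same $H^r_\alpha$-regularity gives $E_N=O(N^{-r}\|g\|_{H_r^0})$, and crucially the shifts $\alpha\mapsto\alpha-\alpha_N^{(\ell)}$ in $b_\ell$ only permute the grid points, so the coupling produces no extra quadrature error. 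This yields \eqref{eq:err_w} for $\ell=1$. For $\ell=0$ I would run an Aubin--Nitsche duality argument in $x$ — the adjoint problem has the same small-perturbation structure and the $H^2$-regularity of Theorem~\ref{th:higher_reg} — which gains one power of $h$ on the finite element part and produces the factor $h^{1-\ell}(N^{-r}+h)$; inserting this into Step~1 gives \eqref{eq:err_u}.

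\textbf{Main obstacle.} I expect the delicate point to be the uniform-in-$N$ inf-sup stability of the \emph{coupled} operator $\mathbf A_M^N-k^2\mathbf B_M^N$ in Step~2: because $\mathbf B_M^N$ mixes all $N$ frequency components, stability cannot be read off block by block, and one must check both that the Bloch-multiplication operator it discretises is a genuinely small, $N$-uniformly bounded perturbation and that its rectangle-rule consistency error in Step~3 does not accumulate in $N$. Once these are secured, the remaining estimates are the usual interpolation-and-duality bookkeeping.
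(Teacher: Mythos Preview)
Your proposal is correct and follows essentially the same route as the paper. The paper's proof is in fact much terser than yours: it defers \eqref{eq:err_w} entirely to Theorem~9 of \cite{Lechl2017} (so your Steps~2--3, including the discrete inf-sup stability you flag as the main obstacle, are precisely what that reference is meant to supply), and for \eqref{eq:err_u} it uses exactly your triangle-inequality splitting from Step~1. The only cosmetic difference is in bounding $\|u_{N,h}-u_N\|_{H^\ell(D_H^\Lambda)}$: you invoke the abstract boundedness of $\J_{D_H}^{-1}$, whereas the paper writes out $(u_{N,h}-u_N)(x)=\sqrt{\Lambda/2\pi}\int_{\Wast}(w_{N,h}-w)(\alpha,x)\,\mathrm{d}\alpha$ on $D_H^\Lambda$ and applies the Minkowski integral inequality (recalled as a lemma just before the theorem) to pass the $L^2(D_H^\Lambda)$-norm inside the $\alpha$-integral. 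Both arguments yield the same bound.
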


\begin{proof}
 From Theorem 9 in \cite{Lechl2017}, \eqref{eq:err_w} is easily obtained, thus we only need to prove \eqref{eq:err_u}. From Theorem \ref{th:err_approx}, as $\|u_N-u\|_{H^1(D_H)}\leq C(N\Lambda/4)^{-r}\|u\|_{H_r^0(D_H)}$, we only need to consider the difference between $u_{N,h}$ and $u_N$. From the definition of the two functions, 
 \begin{equation*}
   u_{N,h}-u_N=\sqrt{\frac{\Lambda}{2\pi}}\int_\Wast\left(w_{N,h}-w\right)(\alpha,\cdot)\d \alpha.
 \end{equation*}
With the help of \eqref{eq:minkowski}, we estimate the $L^2(D^\Lambda_H)$ norm of the above function:
\begin{equation*}
 \begin{aligned}
  \left\|u_{N,h}-u_N\right\|_{L^2(D^\Lambda_H)}&=\sqrt{\frac{\Lambda}{2\pi}}\left[\int_{D^\Lambda_H}\left|\int_\Wast (w_{N,h}-w)(\alpha,x)\d\alpha\right|^2\d x\right]^{1/2}\\
  &\leq \sqrt{\frac{\Lambda}{2\pi}}\int_\Wast\left(\int_{D^\Lambda_H}|(w_{N,h}-w)(\alpha,x)|^2\d x\right)^{1/2}\d \alpha\\
  &=\sqrt{\frac{\Lambda}{2\pi}}\|w_{N,h}-w\|_{L^2(\Wast;L^2(D^\Lambda_H))}\\
  &\leq Ch(N^{-r}+h)\|g\|_{H_r^0(\Gamma_H)}.
 \end{aligned}
\end{equation*}
Similarly, we can also estimate the $H^1-$norm between $u_{N,h}$ and $u_N$. Thus
\begin{equation*}
 \left\|u_{N,h}-u_N\right\|_{H^1(D^\Lambda_H)}\leq C(N^{-r}+h)\|g\|_{H_r^0(D_H)}.
\end{equation*}
Then we finally arrive at
\begin{equation*}
\begin{aligned}
 \left\|u_{N,h}-u\right\|_{H^\ell(D^\Lambda_H)}&\leq \left\|u_{N,h}-u_N\right\|_{H^\ell(D^\Lambda_H)}+\left\|u_N-u\right\|_{H^\ell(D^\Lambda_H)}\\
 &\leq C \left[h^{1-\ell}(N^{-r}+h)+N^{-r}\right]\|g\|_{H_r^0(\D_H)}.
 \end{aligned}
\end{equation*}
The proof is finished.
 
\end{proof}

\section{Numerical examples}

In this section, we present eight examples to illustrate the convergence result of the numerical algorithm. We choose two different groups of refractive indexes $(n_1,n_2)$, both of which are embedded in the half domain above the line $\R\times\{1\}$:\\
{\bf Group 1:}
\begin{eqnarray*}
&& n_1^{(1)}=0.1\sin\left( x_1/\sqrt{2}\right)\mathcal{X}_{0.3}(|x_2-1.5|);\\ &&n_2^{(1)}=0.25\sin(x_1)\mathcal{X}_{0.3}(|x_2-2.5|).
\end{eqnarray*}
{\bf Group 2:}
\begin{eqnarray*}
&& n_1^{(2)}=-0.25\mathcal{X}_{4}(|x_1|)\mathcal{X}_{0.3}(|x_2-1.5|)\text{ and is extended $15$-periodically in $x_1$-direction}
;\\
&&n_2^{(2)}=0.25\mathcal{X}_{0.3}\left(\sqrt{x_1^2+(x_2-2.5)^2}\right)\text{ and is extended $2\pi$-periodically in $x_1$-direction}.
\end{eqnarray*}
Note that $\mathcal{X}_{a}$ is a $C^1$-continuous function with a bounded second order derivative defined in $[0,\infty)$ as:
\begin{equation*}
 \mathcal{X}_a(t)=\begin{cases}
1,\quad\text{ when }t\leq a/2;\\
-\frac{4(a-x)^2(a-4x)}{a^3},\quad\text{ when }a/2<t<a;\\
0,\quad\text{ when }t\geq a.
                  \end{cases}
\end{equation*}
Thus both $n_1^{(1)}$ and $n_1^{(2)}$ satisfy Assumption \ref{asp_n1}. Moreover, $n$ also belongs to the space $C^{0,1}(D)$, as was assumed in Theorem \ref{th:higher_reg}.  Both $n_2^{(1)}$ and $n_2^{(2)}$ are $2\pi$-periodic functions in $x_1$-direction supported in the strip $\R\times(2,3)$; while both the function $n_1^{(1)}$ and $n_1^{(2)}$ are supported in $\R\times(1,2)$. Moreover, $n_1^{(1)}$ is $2\sqrt{2}\pi$-periodic while $n_1^{(2)}$ is $15$-periodic.

In numerical examples, the following parameters are fixed:
\begin{equation*}
\Lambda=2\pi,\,\Lambda^*=1,\, H=3,\, H_1=2,\, h_0=1.
\end{equation*}

In the numerical implementation, we use very large number of points to approximate the Fourier series of $n_1^N(x_1,x_2)$, i.e., $1000 N$ points in $x_1$-direction and $1000$ points in $x_2$-direction, where $N$ is the number of uniform subintervals introduced in Section 3. Then we use the $-8N$-th to $8N$-th coefficients to construct the decomposition \eqref{eq:decomp}, and use the ``pchip'' interpolation in MATLAB to obtain values on mesh points. As the Fourier coefficients decays at the rate of $O(N^{-2})$, we assume that the error brought by this approximation is small enough to be ignored.

\subsection{Numerical examples with exact solutions}

Recall the half space Green's function
\begin{equation*}
G(x,y)=\frac{\i}{4}\left[H_0^{(1)}(k|x-y|)-H_0^{(1)}(k|x-y'|)\right],
\end{equation*}
where $y=(y_1,y_2)^\top$ and $ y'=(y_1,-y_2)^\top$. Furthermore, we assume that $0<y_2<h_0$, thus the point source is located in the upper half space $\R\times(0,+\infty)$ and below $\Gamma_{h_0}$ (see Figure \ref{sample_point}). It is easy to check that, $G(x,y)\in H^1_r(D_H)$ for any $|r|<1$.

\begin{figure}[H]
\centering
\includegraphics[width=15cm]{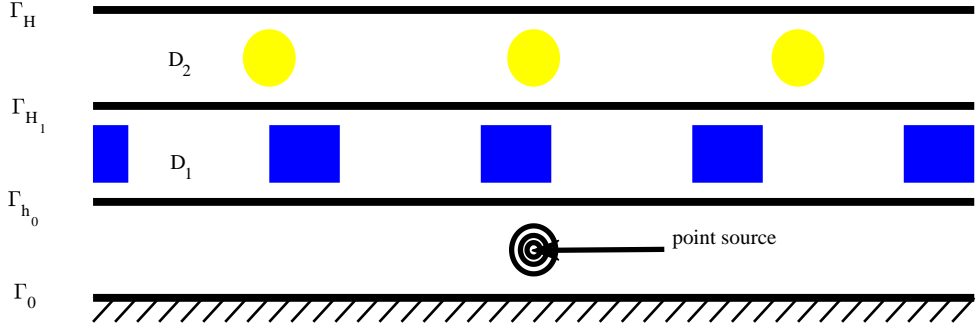}
\caption{Locations of the periodic layers and point sources.}
\label{sample_point}
\end{figure}

 For a fixed point $y$, $G(\cdot,y)$ solves the following equations:
\begin{eqnarray*}
 &&\Delta u+k^2(1+ n) u=g\quad\text{ in }D_H;\\
 && u=G(\cdot,y)\quad\text{ on }\Gamma_{h_0};\\
 && \frac{\partial u}{\partial x_2}-T^+ u=0\quad\text{ on }\Gamma_H;
\end{eqnarray*}
where 
\begin{eqnarray*}
g(x)=k^2 n(x) G(x,y)\quad \text{ in }D_H.
\end{eqnarray*}
From the property of $G(x,y)$ and $n$, $g\in H^0_r(D_H)$.

\begin{remark}
There is a little difference between the numerical examples in this subsection (and also the next subsection)  and the original problem \eqref{eq:var_origional}, as the homogeneous boundary conditions on $\Gamma_{h_0}$ or $\Gamma_H$ are changed. For numerical implementation, we can modify the algorithm with similar technique introduced in \cite{Lechl2017}. For error estimate, we can modify the problem to get an equivalent one in the form of \eqref{eq:var_origional}. Let $\widetilde{u}:=u-u_0$, where $u_0=G(\cdot,y)$ on $\Gamma_{h_0}$, and is extended to a smooth function with a support in $\R\times[h_0,H']$ for some $h_0<H'<H$. Then $u_0$ satisfies \eqref{eq:var_origional} with $g$ replaced with $g-(\Delta+k^2(1+n))u_0$. The regularity of the right hand side is decided by both $g$ and $u_0$, and we can carry out the error estimation in the same way as \eqref{eq:var_origional}. For numerical examples in the next subsection, similar technique can be employed as well.
\end{remark}

We choose one fixed Green's function in this subsection located at the point
\begin{equation*}
P=(0.5,0.4). 
\end{equation*}
The numerical scheme is carried out for the mesh size $h$ is chosen as $0.64,\,0.32,\,0.16,\,0.08$ for $k=6$ and $0.16,\,0.08,\,0.04,\,0.02$ for $k=1$, and the parameter $N$ is taken as $10,20,40,80$. Then the following four examples are considered for different $h$ and $N$, and the relative $L^2$-errors on $\Gamma_H$, defined by
\begin{equation*}
err=\frac{\|u_{N,h}-u\|_{L^2(\Gamma^\Lambda_H)}}{\|u\|_{L^2(\Gamma^\Lambda_H)}}
\end{equation*}
are listed in Table \ref{surf1k1}-\ref{surf2k2}, where the exact solution is $u=G(\cdot,y)$.\\

\noindent
{\bf Example 1.} The wave number $k=1$, the refractive indexes are defined by $n_1^{(1)}$ and $n_2^{(1)}$, the relative errors are listed in Table \ref{surf1k1}.\\

\noindent
{\bf Example 2.} the wave number $k=6$, the refractive indexes are defined by $n_1^{(1)}$ and $n_2^{(1)}$, the relative errors are listed in Table \ref{surf1k2}.\\

\noindent{\bf Example 3.} The wave number $k=1$, the refractive indexes are defined by $n_1^{(2)}$ and $n_2^{(2)}$, the relative errors are listed in Table \ref{surf2k1}.\\

\noindent{\bf Example 4.} the wave number $k=6$, the refractive indexes are defined by $n_1^{(2)}$ and $n_2^{(2)}$, the relative errors are listed in Table \ref{surf2k2}.\\

\begin{table}[htb]
\centering
\caption{Relative $L^2$-errors for Example 1.}\label{surf1k1}
\begin{tabular}
{|p{1.8cm}<{\centering}||p{2cm}<{\centering}|p{2cm}<{\centering}
 |p{2cm}<{\centering}|p{2cm}<{\centering}|p{2cm}<{\centering}|}
\hline
  & $h=0.64$ & $h=0.32$ & $h=0.16$ & $h=0.08$\\
\hline
\hline
$N=10$&$6.3$E$-02$&$4.5$E$-02$&$4.6$E$-02$&$4.7$E$-02$\\
\hline
$N=20$&$5.1$E$-02$&$2.1$E$-02$&$1.6$E$-02$&$1.7$E$-02$\\
\hline
$N=40$&$4.8$E$-02$&$1.5$E$-02$&$6.6$E$-03$&$5.9$E$-03$\\
\hline
$N=80$&$4.8$E$-02$&$1.4$E$-02$&$3.9$E$-03$&$2.2$E$-03$\\
\hline
\end{tabular}
\end{table}

\begin{table}[htb]
\centering
\caption{Relative $L^2$-errors for Example 2.}\label{surf1k2}
\begin{tabular}
{|p{1.8cm}<{\centering}||p{2cm}<{\centering}|p{2cm}<{\centering}
 |p{2cm}<{\centering}|p{2cm}<{\centering}|p{2cm}<{\centering}|}
\hline
  & $h=0.16$ & $h=0.08$ & $h=0.04$ & $h=0.02$\\
\hline
\hline
$N=10$&$3.8$E$-01$&$1.4$E$-01$&$1.1$E$-01$&$1.1$E$-01$\\
\hline
$N=20$&$3.8$E$-01$&$1.1$E$-01$&$4.9$E$-02$&$4.2$E$-02$\\
\hline
$N=40$&$3.8$E$-01$&$1.0$E$-01$&$3.1$E$-02$&$1.7$E$-02$\\
\hline
$N=80$&$3.8$E$-01$&$1.0$E$-01$&$2.8$E$-02$&$8.9$E$-03$\\
\hline
\end{tabular}
\end{table}

\begin{table}[htb]
\centering
\caption{Relative $L^2$-errors for Example 3.}\label{surf2k1}
\begin{tabular}
{|p{1.8cm}<{\centering}||p{2cm}<{\centering}|p{2cm}<{\centering}
 |p{2cm}<{\centering}|p{2cm}<{\centering}|p{2cm}<{\centering}|}
\hline
  & $h=0.64$ & $h=0.32$ & $h=0.16$ & $h=0.08$\\
\hline
\hline
$N=10$&$6.5$E$-02$&$4.6$E$-02$&$4.6$E$-02$&$4.7$E$-02$\\
\hline
$N=20$&$5.2$E$-02$&$2.2$E$-02$&$1.7$E$-02$&$1.7$E$-02$\\
\hline
$N=40$&$4.9$E$-02$&$1.6$E$-02$&$6.7$E$-03$&$5.9$E$-03$\\
\hline
$N=80$&$4.8$E$-02$&$1.5$E$-02$&$4.0$E$-03$&$2.2$E$-03$\\
\hline
\end{tabular}
\end{table}

\begin{table}[htb]
\centering
\caption{Relative $L^2$-errors for Example 4.}\label{surf2k2}
\begin{tabular}
{|p{1.8cm}<{\centering}||p{2cm}<{\centering}|p{2cm}<{\centering}
 |p{2cm}<{\centering}|p{2cm}<{\centering}|p{2cm}<{\centering}|}
\hline
  & $h=0.16$ & $h=0.08$ & $h=0.04$ & $h=0.02$\\
\hline
\hline
$N=10$&$4.0$E$-01$&$1.2$E$-01$&$9.1$E$-02$&$1.1$E$-01$\\
\hline
$N=20$&$4.0$E$-01$&$1.1$E$-01$&$4.2$E$-02$&$3.9$E$-02$\\
\hline
$N=40$&$4.0$E$-01$&$1.1$E$-01$&$3.1$E$-02$&$1.6$E$-02$\\
\hline
$N=80$&$4.0$E$-01$&$1.1$E$-01$&$2.9$E$-02$&$8.9$E$-03$\\
\hline
\end{tabular}
\end{table}

From the relative errors listed in Table \ref{surf1k1}-\ref{surf2k2}, the errors decrease when $N$ gets larger and $h$ gets smaller, thus it is shown  that the numerical method converges as $N\rightarrow\infty$ and $h\rightarrow 0^+$. When the wave number is relatively small, i.e., $k=1$, the error brought by $N$ is dominant, thus the error brought by $h$ could be ignored for small enough $h$, e.g., $h=0.02,\,0.04$ in Example 1 and 3, see the last columns in Table \ref{surf1k1} and \ref{surf2k1}. From Figure \ref{eg} (a), the convergence rate with respect to $N$ is about $O(N^{-1.5})$, which is even better than expected. On the other hand, when the wave number is large, i.e., $k=6$, the dominant part of the relative error is brought by $h$, and the error brought by $N$ could ignored when $N$ is large enough, e.g., $N=80$ in Example 2 and 4, see the last lines in Table \ref{surf1k2} and \ref{surf2k2}. From Figure \ref{eg} (b), the convergence rate with respect to $h$ could reach $O(h^{1.8})$, which is almost as high as expected.  Thus the convergence result proved in Theorem \ref{th:error} is illustrated. Moreover, as the convergence rate with respect to $N$  is higher than expected, it is expected that the numerical algorithm may be improved with similar technique introduced in \cite{Zhang2017e}.

% \begin{figure}[H]
% \centering
% \includegraphics[width=7cm]{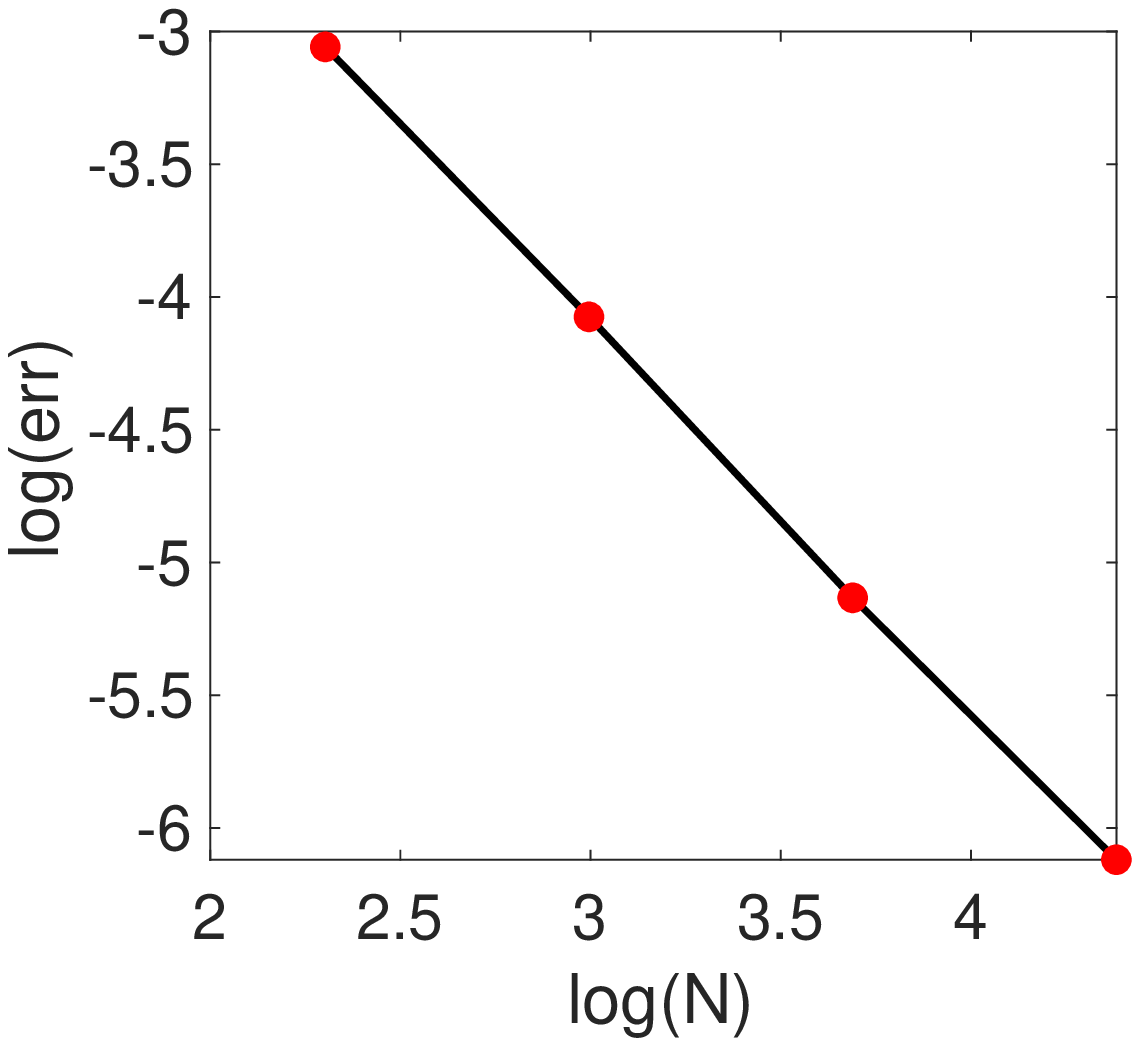}
% \caption{The relative $L^2$-errors for {Example} 1 with $h=0.02$ plotted in logarithmic scale over $N$.}
% \label{eg1}
% \end{figure}
% 
% \begin{figure}[H]
% \centering
% \includegraphics[width=7cm]{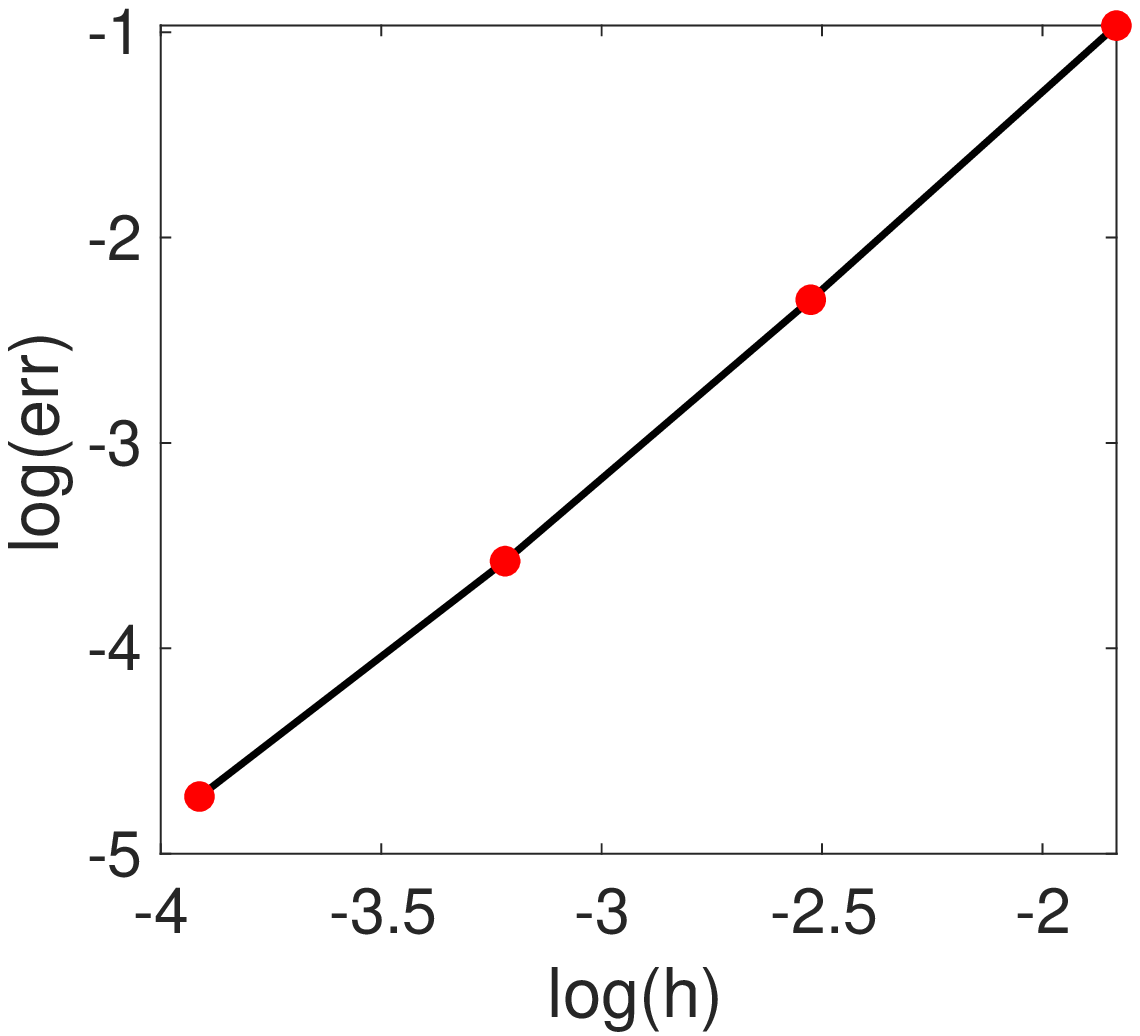}
% \caption{The relative $L^2$-errors for {Example} 2 with $N=80$ plotted in logarithmic scale over $h$.}
% \label{eg2}
% \end{figure}

\begin{figure}[tttttt!!!b]
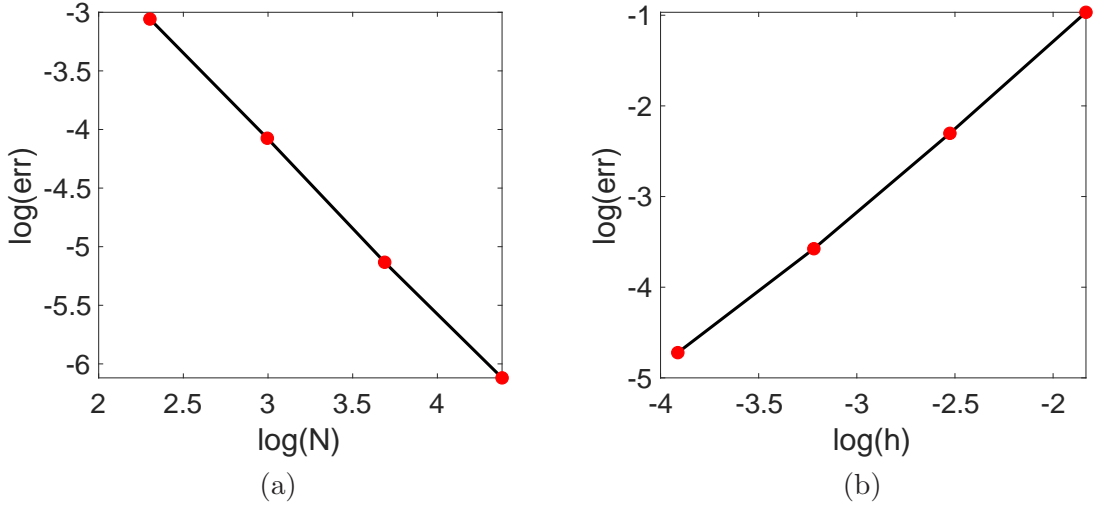

\centering
\begin{tabular}{c c}
\includegraphics[width=0.45\textwidth]{eg1} 
& \includegraphics[width=0.45\textwidth]{eg2} \\[-0cm]
(a) & (b) 
\end{tabular}
\caption{(a): The relative $L^2$-errors for {Example} 1 with $h=0.02$ plotted in logarithmic scale over $N$. (b): The relative $L^2$-errors for {Example} 2 with $N=80$ plotted in logarithmic scale over $h$.}
\label{eg}
\end{figure}

\subsection{Numerical example with non-exact solutions}

In this subsection, the incident field $G(x,y)$ is located above $D_H$, i.e., $y=(\pi,4)^\top$. Thus $u$ satisfies the following equations
\begin{eqnarray*}
 &&\Delta u+k^2(1+ n) u=0\quad\text{ in }D_H;\\
 && u=0\quad\text{ on }\Gamma_{h_0};\\
 && \frac{\partial u}{\partial x_2}-T^+ u=f\quad\text{ on }\Gamma_H;
\end{eqnarray*}
where 
\begin{eqnarray*}
f=\frac{\partial G(\cdot,y)}{\partial x_2}-T^+ G(\cdot,y)\quad \text{ on }\Gamma_H.
\end{eqnarray*}
From the property of $G(\cdot,y)$, $f\in H^{-1/2}_r(\Gamma_H)$ for any $|r|<1$.

As no exact solution is known for the refractive indexes $\left(n_1^{(1)},n_2^{(1)}\right)$ and $\left(n_1^{(2)},n_2^{(2)}\right)$, we can only use finer meshes to produce an ``exact solution''. We set the parameters for the finer meshes to be $h=0.01$ and $N=160$, and let the solution with respect to these meshes be the ``exact solution'' $u$. We set Example 5-8 as follows:\\

\noindent
{\bf Example 5.} The wave number $k=1$, the refractive indexes are defined by $n_1^{(1)}$ and $n_2^{(1)}$. The relative errors with $h=0.16,\,0.08,\,0.04,\,0.02$ and $N=10,\,20,\,40,\,80$ are listed in Table \ref{surf1k1s}.\\

\noindent
{\bf Example 6.} the wave number $k=6$, the refractive indexes are defined by $n_1^{(1)}$ and $n_2^{(1)}$. The relative errors with $0.08,\,0.04,\,0.02$ and $N=10,\,20,\,40,\,80$ are listed in Table \ref{surf1k2s}.\\

\noindent{\bf Example 7.} The wave number $k=1$, the refractive indexes are defined by $n_1^{(2)}$ and $n_2^{(2)}$. The relative errors with $h=0.16,\,0.08,\,0.04,\,0.02$ and $N=10,\,20,\,40,\,80$ are listed in Table \ref{surf2k1s}.\\

\noindent{\bf Example 8.} the wave number $k=6$, the refractive indexes are defined by $n_1^{(2)}$ and $n_2^{(2)}$. The relative errors with $0.08,\,0.04,\,0.02$ and $N=10,\,20,\,40,\,80$ are listed in Table \ref{surf2k2s}.\\

From Table \ref{surf1k1s}-\ref{surf2k2s}, we can conclude similar convergence results as in the last subsection. However, due to the limited memory of our computers, we can not use finer meshes to produce better ``exact solutions'', which results in worse relative errors compare with those in Table \ref{surf1k1}-\ref{surf2k2}. However, although the numerical results are not as good as Example 1-4, they still shows that our algorithm converges as $h\rightarrow 0$ and $N\rightarrow \infty$.

\begin{table}[htb]
\centering
\caption{Relative $L^2$-errors for Example 5.}\label{surf1k1s}
\begin{tabular}
{|p{1.8cm}<{\centering}||p{2cm}<{\centering}|p{2cm}<{\centering}
 |p{2cm}<{\centering}|p{2cm}<{\centering}|p{2cm}<{\centering}|}
\hline
  & $h=0.16$ & $h=0.08$ & $h=0.04$ & $h=0.02$\\
\hline
\hline
$N=10$&$9.9$E$-02$&$9.3$E$-02$&$8.9$E$-02$&$8.9$E$-02$\\
\hline
$N=20$&$6.0$E$-02$&$4.2$E$-02$&$3.2$E$-02$&$3.1$E$-02$\\
\hline
$N=40$&$5.4$E$-02$&$3.0$E$-02$&$1.4$E$-02$&$1.1$E$-02$\\
\hline
$N=80$&$5.3$E$-02$&$2.8$E$-02$&$1.1$E$-02$&$4.6$E$-03$\\
\hline
\end{tabular}
\end{table}

\begin{table}[htb]
\centering
\caption{Relative $L^2$-errors for Example 6.}\label{surf1k2s}
\begin{tabular}
{|p{1.8cm}<{\centering}||p{2cm}<{\centering}|p{2cm}<{\centering}
 |p{2cm}<{\centering}|p{2cm}<{\centering}|p{2cm}<{\centering}|}
\hline
   & $h=0.08$ & $h=0.04$ & $h=0.02$\\
\hline
\hline
$N=10$&$6.7$E$-01$&$2.4$E$-01$&$2.5$E$-01$\\
\hline
$N=20$&$6.1$E$-01$&$1.1$E$-01$&$9.4$E$-02$\\
\hline
$N=40$&$6.0$E$-01$&$9.1$E$-02$&$4.0$E$-02$\\
\hline
$N=80$&$6.0$E$-01$&$9.1$E$-02$&$3.0$E$-02$\\
\hline
\end{tabular}
\end{table}

\begin{table}[htb]
\centering
\caption{Relative $L^2$-errors for Example 7.}\label{surf2k1s}
\begin{tabular}
{|p{1.8cm}<{\centering}||p{2cm}<{\centering}|p{2cm}<{\centering}
 |p{2cm}<{\centering}|p{2cm}<{\centering}|p{2cm}<{\centering}|}
\hline
  & $h=0.16$ & $h=0.08$ & $h=0.04$ & $h=0.02$\\
\hline
\hline
$N=10$&$1.0$E$-01$&$9.7$E$-02$&$9.3$E$-02$&$9.3$E$-02$\\
\hline
$N=20$&$6.2$E$-02$&$4.3$E$-02$&$3.3$E$-02$&$3.2$E$-02$\\
\hline
$N=40$&$5.5$E$-02$&$3.1$E$-02$&$1.5$E$-02$&$1.1$E$-02$\\
\hline
$N=80$&$5.5$E$-02$&$2.9$E$-02$&$1.1$E$-02$&$4.7$E$-03$\\
\hline
\end{tabular}
\end{table}

\begin{table}[htb]
\centering
\caption{Relative $L^2$-errors for Example 8.}\label{surf2k2s}
\begin{tabular}
{|p{1.8cm}<{\centering}||p{2cm}<{\centering}|p{2cm}<{\centering}
 |p{2cm}<{\centering}|p{2cm}<{\centering}|p{2cm}<{\centering}|}
\hline
  & $h=0.08$ & $h=0.04$ & $h=0.02$\\
\hline
\hline
$N=10$&$8.3$E$-01$&$6.3$E$-01$&$7.5$E$-01$\\
\hline
$N=20$&$6.0$E$-01$&$2.4$E$-01$&$2.8$E$-01$\\
\hline
$N=40$&$5.7$E$-01$&$1.2$E$-01$&$9.5$E$-02$\\
\hline
$N=80$&$5.7$E$-01$&$1.1$E$-01$&$4.1$E$-02$\\
\hline
\end{tabular}
\end{table}

\subsection{computational complexity}

At the end of this paper, we would like to make a comment on the computational complexity, especially the comparison with the 
classic finite section method. The variational formulation of the finite section method is, to find the solution $u_N\in\widetilde{H}^1\left(\widetilde{D}_H^N\right)$ such that for any $v\in\widetilde{H}^1\left(\widetilde{D}_H^N\right)$,
\[\int_{\widetilde{D}_H^N}[\grad u_N\cdot\grad\overline{v}-k^2 (1+n_N)u_n\overline{v}]\d x-\int_{\Gamma_H^N}T^+\left[u_N\Big|_{\Gamma_H}\right]\overline{v}\d s=-\int_{\widetilde{D}_H^N}g\overline{v}\d x,\]
where $\widetilde{D}_H^N=D_H\cap[-N\Lambda/2,N\Lambda/2]\times\R$, $\Gamma_H^N=[-N\Lambda/2,N\Lambda/2]\times\{H\}$. The main difference between  computational complexities of the new method and the finite section method is the evaluation of the term with gradients. Suppose there are $M$ mesh points in one periodic cell $D_H^\Lambda$, then there are $NM$ points in the domain $\widetilde{D}_H^N$. Thus for the finite section method, the evaluation of the term $\grad u_N\cdot\grad \overline{v}$ will be carried out $NM$ times. However, for the Floquet-Bloch transform based method, it is only evaluated for the points in $D_H^\Lambda$, thus is carried out only $M$ times (see the formulation of the matrix $A_M^N$). For the second term, from the formulation of $B_M^N$, the computational complexity is almost the same for both methods. For piecewise linear basic functions in triangular meshes, the computational complexity of the evaluation of 
\[\grad u_N\cdot\grad\overline{v}=\frac{\partial u_N}{\partial x_1}\frac{\partial \overline{v}}{\partial x_1}+\frac{\partial u_N}{\partial x_2}\frac{\partial \overline{v}}{\partial x_2}\]
is twice as much as the term $(1+n_N)u_n\overline{v}$, and the value of the integral on $\Gamma_H^N$ is ignorable as it is a one-dimensional problem. Thus roughly speaking, the computational complexity of the finite section method is $C(2+1)NM$ while the value of the new method is $C(2+N)M$, where $C$ is a bounded value independent of methods. Thus when $N$ is large enough, the computational complexity of the new method is about $1/3$ as much as the finite section method. Moreover, the new method also saves a lot of time and space in the evaluation and the storage of coefficients of the basis functions and their derivatives, thus the new method reduces the computational complexity significantly.

\section*{Appendix: The Floquet-Bloch transform}

The main tool used in this paper is the Floquet-Bloch transform. In the Appendix, we recall the definition and some basic properties of the Bloch transform in periodic domains in $\R^2$ (for details see \cite{Lechl2016}).

Suppose $\Omega\subset\R^2$ is $\Lambda$-periodic in $x_1$ - direction, i.e., for any ${ x}=(x_1,x_2)^\top\in\Omega$, the translated point $(x_1+\Lambda j,x_2)\in\Omega,\,\forall{ j}\in\Z$. %Moreover, assume there is an $L>0$ such that $\Omega\subset\R\times[-L,L]$. 
Define one periodic cell by $\Omega^\Lambda:=\Omega\cap\left[\W\times\R\right]$ where $\W=(-\Lambda/2,\Lambda/2]$. For any $\phi\in C_0^\infty(\Omega)$, define the (partial)  Bloch transform in $\Omega$, i.e., $\J_{\Omega}$, of $\phi$ as
\begin{equation*}
\left(\J_\Omega\phi\right)({\alpha},{ x})=C_\Lambda\sum_{{ j}\in\Z}\phi\left({ x}+\left(\begin{matrix}
\Lambda { j}\\0
\end{matrix}\right)\right)e^{-\i{\alpha}\cdot\Lambda{ j}},\quad {\alpha}\in\R ,\,{ x}\in\Omega^\Lambda,
\end{equation*}
where $C_\Lambda$ is a constant defined by $C_\Lambda:=\left[\frac{\Lambda}{2\pi}\right]^{1/2}$.

\begin{remark}
The periodic domain $\Omega$ is not required to be bounded in $x_2$-direction.  
\end{remark}

We can also define the weighted Sobolev  space on the unbounded domain $\Omega$ by
\begin{equation*}
H_r^s(\Omega):=\left\{\phi\in \mathcal{D}'(\Omega):\,(1+|{ x}|^2)^{r/2}\phi({ x})\in H^s(\Omega)\right\}.
\end{equation*}
For any $\ell\in\N$, $s\in\R$, we can also define the following Hilbert space by
\begin{equation*}
H^\ell(\Wast;H^s(\Omega^\Lambda)):=\left\{\psi\in\mathcal{D}'(\Wast\times\Omega^\Lambda):\,\sum_{m=0}^\ell\int_\Wast\left\|\partial^m_{\alpha}\psi({\alpha},\cdot)\right\|\d{\alpha}<\infty\right\}.
\end{equation*}
From interpolation and duality arguments, we can extend the definition of the space $H_0^r(\Wast;H_\alpha^s(\Omega^\Lambda))$ for any $r,\,s\in\R$. The following properties for the $d$-dimensional (partial) Bloch transform $\J_\Omega$ is also proved in \cite{Lechl2016}.

\begin{theorem}\label{th:Bloch_property}
The Bloch transform $\J_\Omega$ extends to an isomorphism between $H_r^s(\Omega)$ and $H_0^r(\Wast;H_\alpha^s(\Omega^\Lambda))$ for any $s,r\in\R$. Its inverse has the form of
\begin{equation*}
(\J^{-1}_\Omega\psi)\left({ x}+\left(\begin{matrix}
\Lambda { j}\\0
\end{matrix}\right)\right)=C_\Lambda\int_\Wast \psi({\alpha},{ x})e^{\i{\alpha}\cdot\Lambda{ j}}\d{\alpha},\quad x_1\in\Omega^\Lambda,\,{ j}\in\Z,
\end{equation*}
and the adjoint operator $\J^*_\Omega$ with respect to the scalar product in $L^2(\Wast;L^2(\Omega^\Lambda))$ equals to the inverse $\J^{-1}_\Omega$. Moreover, when $r=s=0$, the Bloch transform $\J_\Omega$ is an isometric isomorphism.
\end{theorem}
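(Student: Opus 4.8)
The plan is to establish the claimed isomorphism in four stages: first the isometry on the dense subspace $C_0^\infty(\Omega)$ in the unweighted $L^2$ setting, then the extension to Sobolev regularity in $x$, then the incorporation of the polynomial weight in $x_1$, and finally the passage to arbitrary real indices by interpolation and duality. The inversion formula and the identity $\J_\Omega^*=\J_\Omega^{-1}$ will then follow at the end.

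For $\phi\in C_0^\infty(\Omega)$ the defining series is a finite sum, so $\J_\Omega\phi(\alpha,\cdot)$ is a well-defined $\alpha$-quasi-periodic function that is smooth and $\Lambda^*$-periodic in $\alpha$. Expanding $\|\J_\Omega\phi\|^2_{L^2(\Wast;L^2(\Omega^\Lambda))}$, carrying out the $\alpha$-integration first, and using the orthogonality relation $\int_\Wast e^{\i\Lambda(\ell-j)\alpha}\d\alpha=\frac{2\pi}{\Lambda}\delta_{j\ell}=C_\Lambda^{-2}\delta_{j\ell}$, the double sum collapses and one obtains $\|\J_\Omega\phi\|^2_{L^2(\Wast;L^2(\Omega^\Lambda))}=\sum_{j\in\Z}\int_{\Omega^\Lambda}|\phi(x+(\Lambda j,0)^\top)|^2\d x=\|\phi\|^2_{L^2(\Omega)}$. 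Hence $\J_\Omega$ is an isometry from $C_0^\infty(\Omega)$, with the $L^2(\Omega)$ norm, into $L^2(\Wast;L^2(\Omega^\Lambda))$, and extends by density to an isometry on all of $L^2(\Omega)$. Surjectivity onto the $\alpha$-quasi-periodic $L^2$-space follows by checking that the displayed inverse formula, read as the reconstruction of the ``Fourier coefficients'' $\phi(x+(\Lambda j,0)^\top)$ of the $\alpha$-function $\alpha\mapsto\J_\Omega\phi(\alpha,x)$, is a two-sided inverse; this is just the Fourier-coefficient/Fourier-series pairing on $\Wast$.

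Next I would pass to Sobolev regularity. Since $\partial_{x_2}$ commutes with $\J_\Omega$ and $\partial_{x_1}$ is intertwined by $\J_\Omega$ with $\partial_{x_1}$ acting on the quasi-periodic cell, and since the norm on $H_0^0(\Wast;H_\alpha^s(\Omega^\Lambda))$ is $\int_\Wast\|\J_\Omega\phi(\alpha,\cdot)\|^2_{H_\alpha^s(\Omega^\Lambda)}\d\alpha$, the same orthogonality computation applied to the derivatives of $\phi$ gives $\|\J_\Omega\phi\|^2_{H_0^0(\Wast;H_\alpha^s(\Omega^\Lambda))}=\|\phi\|^2_{H^s(\Omega)}$ for nonnegative integer $s$, hence for all $s\in\R$ by interpolation and duality in $s$. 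To bring in the weight, the key point is that on the block $\Omega^\Lambda+(\Lambda j,0)^\top$ the factor $(1+|x_1|^2)^{r/2}$ is, uniformly in $x$, comparable to $(1+|j|^2)^{r/2}$, so that $\|\phi\|^2_{H_r^s(\Omega)}$ is equivalent to $\sum_{j\in\Z}(1+|j|^2)^r\|\phi(\cdot+(\Lambda j,0)^\top)\|^2_{H^s(\Omega^\Lambda)}$; but this last quantity is exactly the squared norm of $\J_\Omega\phi$ in $H_0^r(\Wast;H_\alpha^s(\Omega^\Lambda))$, because weighting the Fourier coefficients on $\Wast$ by $(1+|j|^2)^{r/2}$ reproduces the $H^r(\Wast)$-norm. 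This settles the isomorphism for $r\in\N$, and the general real $r$ follows since both the weighted scale in $x_1$ and the scale $H_0^r(\Wast;\cdot)$ are obtained from the integer cases by complex interpolation for $r>0$ and by duality for $r<0$, and $\J_\Omega$ is compatible with both constructions.

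Finally, the inversion formula has been verified on a dense subspace and extends by continuity, so $\J_\Omega$ is a Hilbert-space isomorphism in every pair of spaces, and isometric when $r=s=0$ by the first step. The identity $\J_\Omega^*=\J_\Omega^{-1}$ with respect to the $L^2(\Wast;L^2(\Omega^\Lambda))$ inner product is then automatic, since a surjective isometry of Hilbert spaces is unitary; alternatively one checks it directly by writing $\langle\J_\Omega\phi,\psi\rangle_{L^2(\Wast;L^2(\Omega^\Lambda))}$ for $\phi\in C_0^\infty(\Omega)$ and $\psi$ smooth and quasi-periodic, applying Fubini, and recognizing the remaining $\alpha$-integral against $e^{\i\alpha\cdot\Lambda j}$ as the inverse-transform formula. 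The principal obstacle is the third stage: making the dictionary ``polynomial decay in $x_1$ $\leftrightarrow$ Sobolev regularity in $\alpha$'' precise for all real $r$, which requires care with the definition of the quasi-periodic cell spaces $H_\alpha^s(\Omega^\Lambda)$, with the fact that $\Omega$ is not assumed bounded in $x_2$, and with the commutation of interpolation and duality with the transform; the full details are carried out in \cite{Lechl2016}.
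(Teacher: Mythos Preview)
The paper does not supply its own proof of this theorem: it is stated in the Appendix as a known result and attributed to \cite{Lechl2016} (``The following properties for the $d$-dimensional (partial) Bloch transform $\J_\Omega$ is also proved in \cite{Lechl2016}''). So there is no in-paper argument to compare your proposal against.

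That said, your sketch follows the standard route for results of this type and is consistent with how the paper uses the transform. In particular, your third stage matches exactly the characterization the paper records in Remark~\ref{rem:Four}: the $H_0^r(\Wast;X_\alpha)$-norm is defined via the Fourier coefficients in $\alpha$, weighted by $(1+|\ell|^2)^r$, which is precisely the ``polynomial decay in $x_1$ $\leftrightarrow$ Sobolev regularity in $\alpha$'' dictionary you invoke. Your closing caveat, that the delicate points (quasi-periodic cell spaces, unboundedness in $x_2$, compatibility with interpolation/duality) are handled in \cite{Lechl2016}, is exactly the paper's own stance.
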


Another important property of the Bloch transform is \high{that it} commutes with partial derivatives, see \cite{Lechl2016}. If $u\in H_r^n(\Omega)$ for some $n\in\N$, then for any ${ \gamma}=(\gamma_1,\gamma_2)\in\N^2$ with $|\gamma|=|\gamma_1|+|\gamma_2|\leq N$,
\begin{equation*}
\partial^{ \gamma}_{  x} \left(\J_\Omega u\right)({ \alpha},{  x})=\J_\Omega[\partial^{ \gamma} u]({ \alpha},{  x}).
\end{equation*}

\begin{remark}
The definition of the partial Bloch transform could also be extended to other periodic domains, for example, periodic hyper-surfaces. If $\Gamma$ is a $\Lambda$-periodic surface defined in $\R^2$, then we can define $\J_\Gamma$ in the same way, and obtain the same properties. In this paper, we will denote the Bloch transform $\J_X$ by the partial Bloch transform in the domain $X\subset\R^2$, which is periodic with respect to $x_1$-direction.
\end{remark}

\begin{remark}\label{rem:Four}
There is an alternative definition for the space $H_0^r(\Wast;X_{ \alpha})$, where $X_{ \alpha}$ is a family of Hilbert spaces that are ${ \alpha}$-quasi-periodic in $\widetilde{  x}$. Let 
\begin{equation*}
\phi_{\Lambda^*}^{({  j})}({ \alpha})=\frac{1}{\sqrt{|\Lambda^*|}}e^{\i{ \alpha}\cdot\Lambda {  j}},\,{  j}\in\Z
\end{equation*}
be a complete orthonormal system in $L^2(\Wast)$, then any function $\psi\in \mathcal{D}'(\Wast\times\Omega^\Lambda)$ has a Fourier series
\begin{equation*}
\psi({ \alpha},{  x})=\frac{1}{\sqrt{\Lambda^*}}\sum_{{ \ell}\in\Z}\hat{\psi}_{\Lambda^*}({ \ell},{  x})e^{\i{ \alpha}\cdot\Lambda{ \ell}},
\end{equation*}
where $\hat{\psi}_{\Lambda^*}({ \ell},{  x})=<\psi(\cdot,{  x}),\phi_{\Lambda^*}^{({ \ell})}>_{L^2(\Wast)}$. Then the squared norm of any $\psi\in H_0^r(\Wast;X_{ \alpha})$ equals to
\begin{equation*}
\|\psi\|^2_{H_0^r(\Wast;X_{ \alpha})}=\sum_{{ \ell}\in\Z}(1+|{ \ell}|^2)^r\left\|\hat{\psi}_{\Lambda^*}({ \ell},\cdot)\right\|^2_{X_{ \alpha}}.
\end{equation*}
\end{remark}

\bibliographystyle{alpha}
\bibliography{ip-biblio} % ../../ip-biblio/ip-biblio.bib,

\end{document}